\documentclass[a4paper,10pt]{article}
\usepackage{latexsym}
\usepackage[ansinew]{inputenc}

\usepackage{amsthm}
\usepackage{amsmath}
\usepackage{amssymb}
\usepackage{amsfonts}
\usepackage{color}

\usepackage{array}
\usepackage{float}
\usepackage{caption}
\usepackage{graphicx}
\usepackage{anysize}

\usepackage{tabularx}

\parskip 1.5mm
\setlength\parindent{0cm}

\setcaptionwidth{0.95\textwidth}%

\newcommand{\beq}{\begin{equation}}
\newcommand{\enq}{\end{equation}}
\newtheorem{lemma}{Lemma}
\newtheorem{proposition}{Proposition}

\newtheorem{corollary}{Corollary}
\newtheorem{definition}{Definition}
\newtheorem{remark}{Remark}

\newcommand{\cp}{\mathcal{C}}
\newcommand{\acp}{{\rm{Aut}}(\cp)}

\newcommand{\DdE}{{\mathcal{L}^2}(E)}
\newcommand{\Dv}{{\mathcal{L}^1}(v)}
\newcommand{\Ddv}{{\mathcal{L}^2}(v)}
\newcommand{\Dw}{{\mathcal{L}^1}(w)}
\newcommand{\Ddw}{{\mathcal{L}^2}(w)}

\normalsize \pagestyle{plain}

\title{On the automorphism group of the asymptotic pants complex
of a planar surface of infinite type}
\author{Ariadna Fossas $\qquad$ Maxime Nguyen\\
Institut Fourier\\
Universit\'e de Grenoble I\\
38402 St. Martin d'H\`eres. France\\
ariadna.fossas@ujf-grenoble.fr\\
maxime.nguyen@ujf-grenoble.fr}
\date{}

\begin{document}

\maketitle

\begin{abstract}
We consider a planar surface $\Sigma$ of infinite type which has 
the Thompson group $\mathcal{T}$ as asymptotic mapping class group.
We construct the asymptotic pants complex $\mathcal{C}$ of $\Sigma$ and 
prove that the group $\mathcal{T}$ acts transitively by automorphisms on it. 
Finally, we establish that the automorphism group of the complex 
$\mathcal{C}$ is an extension of the Thompson group 
$\mathcal{T}$ by $\mathbb{Z}/2\mathbb{Z}$.
\end{abstract}

\textbf{2000 MSC classsification: 57 N 05, 20 F 38, 57 M 07, 20 F 34.}

\textbf{Keywords:} mapping class groups, pants complex, infinite type
surfaces, group actions, Thompson's groups.

\section{Introduction}

Given a compact surface of genus $g$ with $b$ boundary components,
one can build various locally infinite simplicial complexes on which the
mapping class group of the surface acts by automorphisms, mainly the curve 
complex, the arc complex and the pants complex. The automorphism group of 
each one of these complexes is related to the mapping class group 
itself. Specifically, Ivanov in \cite{ivanov} and subsequently Luo in 
\cite{luo} proved that the 
automorphism group of the curve complex is the extended mapping class 
group for almost all compact surfaces, and Korkmaz extended this result to
punctured spheres and punctured tori in \cite{korkmaz}. Analogous
results were proved for the arc complex by Ivanov \cite{ivanov}
and Irmak and McCarthy \cite{irmakMccarthy}, for the pants complex 
by Margalit \cite{margalit},
for the Hatcher-Thurston complex by Irmak and Korkmaz \cite{irmakKorkmaz},
for the arc and curve complex by Korkmaz and Papadopoulos 
\cite{korkmazPapadopoulos}, and for the complex of non-separating curves
by Schmutz-Schaller \cite{schmutz} and subsequently by Irmak \cite{irmak}.  

When dealing with surfaces of infinite type, one issue is to find good 
analogues for mapping class groups and for the complexes enumerated 
above. Funar and Kapoudjan defined the notion of
asymptotic mapping class group for some surface of genus zero and 
infinite type in \cite{universalmcg}, the first one of a series
of three papers (see also \cite{braidedthompson} and \cite{combable}).  
They constructed several complexes for this type of surfaces and proved that 
the asymptotic mapping class group acts on them by automorphisms. 
These complexes can be seen as generalizations of the pants complex
to the infinite type case. It is worth to mention that the 
asymptotic mapping class groups they defined are closely related to
Thompson's group $\mathcal{T}$, which is one of the first examples of finitely
presented infinite simple groups.

In this paper, we consider the case of a planar infinite surface with non
compact boundary, which has the Thompson group $\mathcal{T}$ as
its asymptotic mapping class group. Then, we construct a locally
infinite cellular complex $\mathcal{C}$ related to the pants complex 
of the infinite surface of genus zero of \cite{universalmcg}. We prove
that the group $\mathcal{T}$ acts by automorphisms on the complex and,
finally, we establish that the automorphism group of the complex is 
$\mathcal{T} \rtimes \mathbb{Z}/2\mathbb{Z}$. We also give a 
geometric interpretation to this semi-direct product as the extended
asymptotic mapping class group, obtaining in this way a complete analogue 
to Ivanov's theorem for the surface $\Sigma$ of infinite type.

Our result can also be considered from a different viewpoint. In 1970,
Tits proved in \cite{tits} that most subgroups of automorphism groups of
trees generated by vertex stabilizers are simple.
This result was extended by Haglund and Paulin in \cite{haglundpaulin} to 
the case of automorphism groups of negatively curved polyhedral complexes 
which are (in general uncountable) non-linear and virtually simple.
On the other hand, Farley proved in \cite{farley} that the Thompson groups
$\mathcal{F}$, $\mathcal{T}$ and $\mathcal{V}$ act properly and isometrically 
on CAT(0) cubical complexes. In particular, these groups are a-T-menable and 
satisfy the Baum-Connes conjecture with arbitrary coefficients. This leads to 
the legitimate question of realizing the simple groups $\mathcal{T}$ and
$\mathcal{V}$ as automorphisms groups of suitable cell complexes. Although the 
complex which we obtain is not Gromov hyperbolic, it gives a convenient answer 
to this question for the group $\mathcal{T}$.

\subsection*{Definitions and statement of the results}

\subsubsection*{The surface $\Sigma$ and its asymptotic mapping class group}

Let $\mathbb{D}^2$ be the hyperbolic disc and suppose that its boundary 
$\partial\mathbb{D}^2$ is parametrized by the unit interval (with 
identified endpoints). Let $E$ denote the triangulation of $\mathbb{D}^2$ 
given by the family of bi-infinite geodesics representing the standard 
dyadic intervals, i.e. the family of geodesics $I_a^n$ joining the 
points $p=\frac{a}{2^n}$, $q=\frac{a+1}{2^n}$ on $\partial\mathbb{D}^2$, 
where $a,n$ are integers satisfying $0 \leq a \leq 2^n -1$. Let $T_3$ be 
the dual graph of $E$, which is an infinite (unrooted) trivalent tree.
We choose to realize the tree $T_3$ in $\mathbb{D}^2$ by using geodesic 
segments obtained from the $I_a^n$ (with $n \geq 1$ and $a$ even) by 
rotations of angles $\frac{\pi}{2^n}$, as shown in figure \ref{dual}.
We also denote by $T_3$ this realisation. 

\begin{figure} [H]
\begin{center}
\input{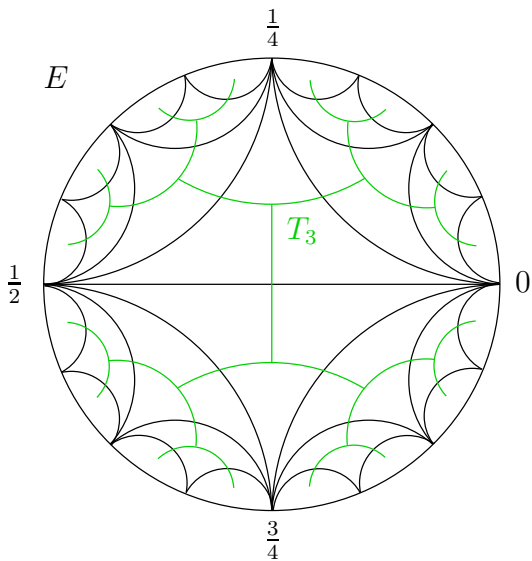}
\caption{The triangulation $E$ of $\mathbb{D}^2$ and its dual tree $T_3$.}
\label{dual}
\end{center}
\end{figure}

Now, let $\Sigma$ be a closed $\delta$-neighbourhood of $T_3$
(see figure \ref{surface}). Remark that 
the surface $\Sigma$ is planar (thus oriented), non compact, contractible and 
its boundary has infinitely many connected components. Note that we can 
choose $\delta$ small enough to avoid self-intersections of the boundary 
and such that each intersection $I_a^n \cap \Sigma$ is a single connected 
arc joining two boundary components of $\Sigma$. 

The triangulation $E$ together with the boundary components of $\Sigma$ 
define a tessellation of our surface into hexagons 
(see figure \ref{surface}). By abuse of notation 
we will denote this hexagonal tessellation also by $E$. Note that the 
boundary of each hexagon alternates sides contained in different connected 
components of $\partial\Sigma$ with sides contained in different arcs 
of the triangulation $E$. 

\begin{figure} [H]
\begin{center}
\input{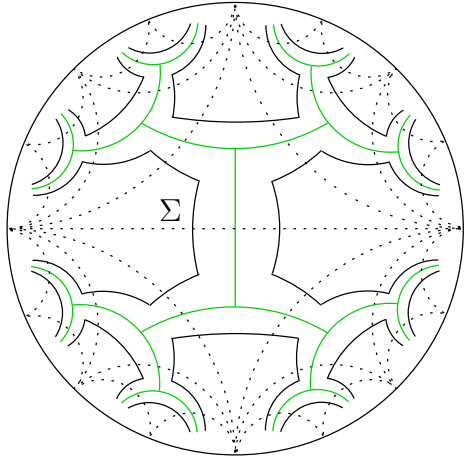}
\caption{The surface $\Sigma$ and its hexagonal tessellation.}
\label{surface}
\end{center}
\end{figure}

\begin{definition} The set of arcs $\{I_{a_1}^{n_1}, \ldots, I_{a_k}^{n_k}\}$
of $E$ is {\rm{separating}} if the union of its arcs bounds a compact 
subsurface of $\Sigma$ containing $I_{0}^{1}\cap \Sigma$.
\end{definition}

Let $Homeo(\Sigma)$ denote the group of orientation-preserving 
homeomorphisms of $\Sigma$. 

\begin{definition} An element $\varphi \in Homeo(\Sigma)$ is 
{\rm{asymptotically rigid}} if there exists a set of separating arcs 
$\{I_{a_1}^{n_1}, \ldots, I_{a_k}^{n_k}\}$ such that:
\begin{enumerate}
\item For all $1 \leq i \leq k$, there exist $I_{b_i}^{m_i}$ such that 
$\varphi(I_{a_i}^{n_i})\cap \Sigma = I_{b_i}^{m_i}\cap \Sigma$, and
\item For all $1 \leq i \leq k$, for all $j \in \mathbb{N}$, and for all
$0 \leq l < 2^j$ the arc 
$\varphi(I_{2^ja_i+l}^{n_i+j})\cap \Sigma$ is equal to the arc 
$I_{2^jb_i+l}^{m_i+j}\cap\Sigma$. 
\end{enumerate}
\end{definition} 

\begin{remark}
{\rm{The set of arcs $\{I_{b_1}^{m_1},\ldots,I_{b_k}^{m_k}\}$ from the
definition above is a set of separating arcs.}}
\end{remark}

\begin{remark}
{\rm{Let $\varphi$ be an asymptotically rigid homeomorphism.
The first condition of the definition says that there exists 
a compact subsurface $C \subset \Sigma$ which boundary is
composed by segments of $\partial \Sigma$ and arcs of $E$ such that 
$\varphi$ sends $C$ to another compact subsurface $C'$ of the same type 
(i.e. which boundary is composed by segments of $\partial \Sigma$ and arcs 
of $E$). The second
condition deals with the behaviour of $\varphi$ outside $C$. Let
$t_1, \ldots, t_k$ be the connected components of $T_3 \setminus (T_3 \cap C)$
and $t_1', \ldots, t_k'$ be the connected components of 
$T_3 \setminus (T_3 \cap C')$. Then the second condition says that each $t_i$
is 'naturally' sent to $t_i'$.}}
\end{remark}

The group of asymptotically rigid homeomorphisms of $\Sigma$ will be 
denoted by $Homeo_{a}(\Sigma)$.

\begin{definition} The {\rm{asymptotic mapping class group}} of $\Sigma$ 
is the quotient of $Homeo_{a}(\Sigma)$ by the group of isotopies of $\Sigma$. 
\end{definition}

\begin{remark} {\rm{(\cite{braidedthompson}, section 1.3)
The asymptotic mapping class group of $\Sigma$ is 
isomorphic to the Thompson group $\mathcal{T}$, which is the group of
orientation preserving piecewise linear homeomorphisms of the
circle that are differentiable except at finitely many dyadic
rational numbers, such that, on intervals of
differentiability, the derivatives are powers of 2, and which send
the set of dyadic rational numbers to itself. For
a complete introduction to Thompson's groups see \cite{CFP}.}}
\end{remark}

\subsubsection*{The complex $\mathcal{C}$}

Once the surface $\Sigma$ defined, we are going to define a 
2-dimensional cellular complex $\mathcal{C}$. The vertices 
of $\mathcal{C}$ are the isotopy classes relatively to the boundary of the
tessellations of $\Sigma$ into hexagons which differ from the 
distinguished tessellation $E$ only in a finite number of hexagons.
Note that every hexagon of a tessellation have 3 sides corresponding
to different boundary components of $\Sigma$ and 3 sides corresponding 
to arcs joining these boundary components.

Let $v$ be a vertex of $\mathcal{C}$, and let $a$ be an arc bounding
two hexagons $h_1$, $h_2$ of the tessellation $v$. Let $a'$ be an arc
contained in $h_1 \cup h_2$ joining two boundary components of $\Sigma$
such that $i([a],[a'])=1$, where $i$ denote the geometric intersection
number and $[a]$ the isotopy class of $a$ relatively to the boundary. 
Let $w$ be the vertex of $\mathcal{C}$ obtained from $v$ by replacing the arc 
$a$ by $a'$. Then, the vertices $v$ and $w$ are joined by an edge on 
$\mathcal{C}$. We say that $w$ is obtained from $v$ by \textit{flipping}
the arc $a$.

Let $v$ be a vertex of $\mathcal{C}$ and let $a$ and $b$ be two arcs,
each one of which bounding two hexagons of the tessellation $v$.
We say that $a$ and $b$ are \textit{adjacent} if there exists an hexagon on 
the tessellation $v$ having both of them on its boundary. There are two
possible cases:
\begin{enumerate}
\item The two arcs are not adjacent. 
Then, flipping first $a$ and then $b$ one obtains the same tessellation as
the result of first flipping $b$ and then $a$. 
Thus, the commutativity of flips defines a 4-cycle on the 1-skeleton of 
$\mathcal{C}$ and we fill it with a \textit{squared 2-cell}.
\item The two arcs are adjacent.
Then, flipping first $a$ and then $b$ one obtains a different tessellation
from the result of first flipping $b$ and then $a$. These two
tessellations differ from a flip.
We fill the corresponding 5-cycle with a \textit{pentagonal 2-cell}.
\end{enumerate} 

We obtain, in particular, the following analogue to the case of the
cellular complexes associated to compact surfaces:

\textbf{Proposition \ref{lemact}.} The asymptotic mapping class group of 
$\Sigma$ acts transitively on the complex $\mathcal{C}$ by automorphisms.

It is now natural to ask whether or not
the automorphism group of the cellular complex $\mathcal{C}$ is related
to the asymptotic mapping class group of the surface $\Sigma$. To answer this 
question, we construct the following exact sequence:
$$1 \longrightarrow \mathcal{T} \longrightarrow \acp
\longrightarrow \mathbb{Z}/2\mathbb{Z} \longrightarrow 1,$$
and prove that it splits. 

Thus, we obtain the following result:

\textbf{Theorem 1.} The automorphism group of the complex $\mathcal{C}$
is isomorphic to the semi-direct product 
$\mathcal{T} \rtimes \mathbb{Z}/2\mathbb{Z}$.

Furthermore, the extension is not trivial; we obtain a non trivial
action of $\mathbb{Z}/2\mathbb{Z}$ to $\text{Out}(\mathcal{T})$.
Thus, as Brin established in \cite{brin} that the group of outer automorphisms
of $\mathcal{T}$ is isomorphic to $\mathbb{Z}/2\mathbb{Z}$, we get an
isomorphism between $\text{Out}(\mathcal{T})$ and $\mathbb{Z}/2\mathbb{Z}$.

Finally, the tools used to prove the main theorem lead us to give a 
geometric interpretation of this result. It turns out that one can 
classify the elements of $\acp$ in two categories: those which preserve
(in some natural sense) the orientation of the complex, and those which
reverse the orientation. Then, it is natural to define the \textit{extended
mapping class group of $\Sigma$} as 
$\mathcal{T} \rtimes \mathbb{Z}/2\mathbb{Z}$, which makes theorem 1 a
complete analogue to Ivanov's theorem for compact surfaces.

\subsubsection*{Structure of the paper}

This paper is structured as follows. In the second section, 
we provide a modification of the surface $\Sigma$ which does 
not involve changes on the mapping class group. This will allow us 
to describe the isomorphism between the asymptotic mapping class 
group and Thompson's group $\mathcal{T}$. In the third section, 
we briefly discuss the analogy between the complex $\mathcal{C}$ and
the complexes obtained for compact surfaces and define the action
of the asymptotic mapping class group on $\mathcal{C}$. 
Finally, in the fourth section, we describe the structure of the 
automorphisms group of the complex $\mathcal{C}$, proving theorem 1.

\subsection*{Acknowledgements}
The first author acknowledges financial support from ``La Caixa'' 
and ``l'Ambassade de France en Espagne'' Post-graduated Fellowship.
Both authors would like to thank Javier Aramayona, Francis Bonahon, 
Jos\'e Burillo,
Fran\c{c}ois Dahmani, Vlad Sergiescu and Juan Souto for useful comments. 
The authors are particularly indebted to Louis Funar and Fr\'ed\'eric Mouton
for enlightening discussions and suggestions.

\section{The surface and its asymptotic mapping class group}

In this section we introduce a slightly different surface 
$\Sigma'$ which  allows us to simplify the situation. 
Both surfaces $\Sigma$ and $\Sigma'$ have the 
same asymptotic mapping class group. 

\begin{definition}
The surface $\Sigma'$ is obtained by attaching to the hyperbolic disk 
$\mathbb{D}^2$ the points of its boundary corresponding to the 
dyadic rational numbers.
\end{definition}

\begin{remark} {\rm{The triangulation $E$ of 
$\mathbb{D}^2$ is a triangulation of $\Sigma'$ having for 
vertices all the boundary components of $\Sigma'$.}} \label{triang}
\end{remark}

\begin{lemma} Let $\partial_0\Sigma$ be a connected component of the boundary
of $\Sigma$, which is a bi-infinite line on $\mathbb{D}^2$.
The adherence of $\partial_0\Sigma$ is $\partial_0\Sigma \cup \{q\}$,
where $q$ is a dyadic rational number on $\partial\mathbb{D}^2$.
Thus, there is a natural bijection between the set of dyadic 
rational numbers $\partial \Sigma'$
and the set of connected components of the boundary of $\Sigma$. 
\label{bijection}
\end{lemma}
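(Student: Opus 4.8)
The plan is to reduce everything to the combinatorics of the tessellation $E$ and of the dual tree $T_3$, organised by the fan of ideal triangles around a vertex. The starting point is that the hexagonal tessellation already labels the boundary arcs. Indeed, each hexagon is the piece $\Sigma\cap\Delta$ cut from one ideal triangle $\Delta$ of $E$, and its three sides lying on $\partial\Sigma$ sit one near each of the three ideal vertices of $\Delta$. Thus every arc of $\partial\Sigma$ points at a well-defined ideal vertex of $E$, that is, at a dyadic rational of $\partial\mathbb{D}^2$.

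First I would fix a dyadic rational $q$, viewed as an ideal vertex of $E$, and describe the triangles of $E$ incident to it. They form a bi-infinite fan $(\Delta_i)_{i\in\mathbb{Z}}$ in which consecutive triangles share a geodesic edge of $E$ issuing from $q$; dually these edges cross a bi-infinite geodesic path $\gamma_q$ in $T_3$, and $\gamma_q$ is exactly the part of $T_3$ bounding $R_q$, the component of $\mathbb{D}^2\setminus T_3$ whose single ideal point is $q$. Since $R_q$ is connected, the portion of $\partial\Sigma$ facing $R_q$ --- obtained by pushing $\gamma_q$ off $T_3$ a distance $\delta$ --- is a single bi-infinite line. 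I would then check that this line is precisely the concatenation of the boundary arcs sitting near $q$ inside the hexagons $\Sigma\cap\Delta_i$, so that it is one connected component $\partial_0\Sigma$ and absorbs every arc pointing at $q$.

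Next I would compute the adherence. As $i\to\pm\infty$ the triangles $\Delta_i$ of the fan accumulate only at $q$: the second endpoints of the edges of $E$ issuing from $q$ are dyadic rationals tending to $q$ from both sides, and the Euclidean diameters of the $\Delta_i$ shrink to $0$. Hence both ends of $\partial_0\Sigma$, which threads through the $\Delta_i$, converge to $q$ and to no other point of $\partial\mathbb{D}^2$, giving adherence $\partial_0\Sigma\cup\{q\}$ with $q$ dyadic, which is the first assertion.

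Finally, for the bijection I would reverse the construction. The assignment $\partial_0\Sigma\mapsto q$ is well defined by the adherence computation; it is injective since all boundary arcs pointing at a given $q$ lie on the single component built above, so two distinct components cannot have the same limit point; and it is surjective since every dyadic rational is an ideal vertex of $E$ (the endpoints of the arcs $I_a^n$ exhaust the dyadic rationals), hence carries a fan and a boundary component. As the dyadic rationals are, by Remark \ref{triang}, exactly the points adjoined to pass from $\mathbb{D}^2$ to $\Sigma'$, this yields the claimed natural bijection. The hard part will be the middle step: proving rigorously that the two ends of $\partial_0\Sigma$ converge, that they converge to the same point, and that this point is the ideal vertex $q$ and not some other boundary point --- that is, controlling the geometry of the fan $(\Delta_i)$ and of the $\delta$-push-off --- together with the injectivity claim that the arcs surrounding $q$ make up a single component.
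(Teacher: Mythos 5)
The paper states Lemma \ref{bijection} without proof (as it does the corollaries that follow), so there is no argument of the authors' to compare against; your proposal supplies the missing argument and it is correct. Your key structural observation --- that the components of $\mathbb{D}^2\setminus T_3$ are in bijection with the ideal vertices of $E$, each bounded by the bi-infinite dual path $\gamma_q$ crossing the fan of triangles at $q$, so that $\partial\Sigma$ has exactly one component per dyadic rational, namely the $\delta$-push-off of $\gamma_q$ into that region --- is exactly what the lemma rests on, and the convergence of both ends to $q$ follows as you say from the Euclidean diameters of the fan triangles $\Delta_i$ tending to $0$. The only implicit ingredient is the paper's standing assumption that $\delta$ is small enough for the push-off of $\gamma_q$ to be the full intersection of $\partial\Sigma$ with that complementary region, which you are entitled to use.
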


\begin{corollary}
Let $\partial_0\Sigma$ be a connected component of the boundary
of $\Sigma$, and let $I_{a_1}^{n_1}$, $I_{a_2}^{n_2}$ be two geodesics
of $E$ intersecting $\partial_0\Sigma$. Then, $I_{a_1}^{n_1}$ and 
$I_{a_2}^{n_2}$ have a common boundary point (dyadic). \label{extreme}
\end{corollary}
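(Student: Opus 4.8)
The plan is to reduce the corollary to the following description of how the geodesics of $E$ meet the boundary components of $\Sigma$: for every geodesic $I_a^n$, the single arc $I_a^n \cap \Sigma$ joins exactly the two boundary components of $\Sigma$ which correspond, under the bijection of Lemma \ref{bijection}, to its two dyadic endpoints $\frac{a}{2^n}$ and $\frac{a+1}{2^n}$. Granting this, the corollary is immediate: if both $I_{a_1}^{n_1}$ and $I_{a_2}^{n_2}$ meet $\partial_0\Sigma$, and $q$ is the dyadic point with $\overline{\partial_0\Sigma} = \partial_0\Sigma \cup \{q\}$ supplied by Lemma \ref{bijection}, then $q$ must be an endpoint of each geodesic, so $q$ is their common boundary point.

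First I would set up the complementary regions. By Lemma \ref{bijection} each boundary component is a bi-infinite line both of whose ends converge to the same dyadic point $q$, so it bounds a region $U_q$ of $\mathbb{D}^2 \setminus \Sigma$ whose closure meets $\partial\mathbb{D}^2$ exactly in $\{q\}$. Distinct dyadic points thus give distinct regions, and these regions exhaust $\mathbb{D}^2 \setminus \Sigma$. In particular, among all components of $\mathbb{D}^2 \setminus \Sigma$, the point $\frac{a}{2^n}$ lies in the closure of $U_{a/2^n}$ and of no other.

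Next I would analyze the two ends of the arc $I_a^n \cap \Sigma$, using the hypothesis (guaranteed by the choice of $\delta$) that this intersection is a single connected arc. Consider the end of $I_a^n$ limiting to the ideal point $\frac{a}{2^n}$. Because the intersection with $\Sigma$ is a single arc, after its last crossing of $\partial\Sigma$ the geodesic stays in one component of $\mathbb{D}^2 \setminus \Sigma$ all the way to $\frac{a}{2^n}$; since $\frac{a}{2^n}$ lies in the closure of $U_{a/2^n}$ only, that component is $U_{a/2^n}$, and hence the boundary component crossed at this end is the one associated with $\frac{a}{2^n}$. The symmetric argument at the other end gives $\frac{a+1}{2^n}$, which proves the description and therefore the corollary.

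The only delicate point is this matching of each end of the arc with the correct complementary region. It rests entirely on Lemma \ref{bijection}, which pins down the closure of each boundary component as a single dyadic point, together with the single-arc property of $I_a^n \cap \Sigma$, which forces the geodesic to exit $\Sigma$ exactly twice and to run unobstructed to each ideal endpoint afterwards. Once those two facts are in hand, the remaining steps are purely topological and the common endpoint follows by a one-line pigeonhole on $\{\frac{a_1}{2^{n_1}}, \frac{a_1+1}{2^{n_1}}\}$ and $\{\frac{a_2}{2^{n_2}}, \frac{a_2+1}{2^{n_2}}\}$.
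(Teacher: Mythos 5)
Your argument is correct, and it follows the only natural route: the paper states this corollary without proof as an immediate consequence of Lemma \ref{bijection}, and your write-up simply supplies the details of that deduction (each end of the single arc $I_a^n\cap\Sigma$ must land on the boundary component whose adherence point is the corresponding ideal endpoint of the geodesic, after which the common endpoint follows by pigeonhole). Nothing to object to.
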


\begin{corollary}
Let $\{I_{a_1}^{n_1},\ldots,I_{a_k}^{n_k}\}$ be a set of separating arcs
which are cyclically ordered. Then, they are the sides of an ideal 
hyperbolic $k$-gon.
\label{polygons}
\end{corollary}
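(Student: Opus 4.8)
The plan is to interpret the cyclic ordering geometrically and then invoke Corollary \ref{extreme} to produce the vertices of the polygon. First I would use the fact that $\Sigma$ is planar and contractible: a separating set of arcs $\{I_{a_1}^{n_1}, \ldots, I_{a_k}^{n_k}\}$ bounds a compact subsurface $C$ which is a disc, so its boundary $\partial C$ is a single circle alternating between the arcs $I_{a_i}^{n_i} \cap \Sigma$ and segments of $\partial\Sigma$. I would take the cyclic order on the arcs to be precisely the order in which they are encountered when traversing $\partial C$, so that each consecutive pair $I_{a_i}^{n_i}$, $I_{a_{i+1}}^{n_{i+1}}$ (indices modulo $k$) is joined along $\partial C$ by a segment lying in a single connected component $\partial_{j_i}\Sigma$ of $\partial\Sigma$. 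In particular both geodesics of such a pair intersect the same boundary component $\partial_{j_i}\Sigma$.

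Next I would apply Corollary \ref{extreme} to each consecutive pair. Since $I_{a_i}^{n_i}$ and $I_{a_{i+1}}^{n_{i+1}}$ both meet $\partial_{j_i}\Sigma$, they share a common dyadic boundary point $q_i \in \partial\mathbb{D}^2$, which by Lemma \ref{bijection} is the limit point of $\partial_{j_i}\Sigma$. Passing to the surface $\Sigma'$, where the dyadic points are attached, each $q_i$ becomes a genuine vertex common to two consecutive geodesics. Thus the $k$ geodesics close up into a cyclic chain in which $q_i$ is the shared ideal endpoint of $I_{a_i}^{n_i}$ and $I_{a_{i+1}}^{n_{i+1}}$.

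To conclude that this chain is an ideal hyperbolic $k$-gon I would verify embeddedness. The arcs belong to the triangulation $E$, and any two distinct standard dyadic intervals $[a/2^n,(a+1)/2^n]$ are either nested or disjoint; consequently the corresponding bi-infinite geodesics of $E$ never meet transversally, but at most at a shared ideal endpoint. Hence the mutual intersections of $I_{a_1}^{n_1}, \ldots, I_{a_k}^{n_k}$ are exactly the ideal vertices $q_1, \ldots, q_k$, and the region they bound is an ideal hyperbolic $k$-gon having these geodesics as its sides.

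The main obstacle I anticipate is justifying cleanly that the combinatorial notion of \emph{cyclically ordered} coincides with the geometric chaining produced by Corollary \ref{extreme}, that is, that the order of the arcs around $\partial C$ matches the sharing of ideal endpoints with no repetition or crossing. This rests on the planarity of $\Sigma$ together with the observation that a connected boundary segment joining two arcs forces both to meet the same component of $\partial\Sigma$; once this identification is secured, the statement follows directly from Corollary \ref{extreme} and the non-crossing property of $E$.
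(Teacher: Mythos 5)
Your argument is correct and is exactly the derivation the paper intends: the corollary is stated without proof as a consequence of Lemma \ref{bijection} and Corollary \ref{extreme}, and your chaining of consecutive arcs through shared dyadic endpoints, combined with the non-crossing (nested-or-disjoint) property of the geodesics of $E$, supplies precisely that implicit argument. The one point you flag as an obstacle --- that each boundary component of $\partial\Sigma$ meets the disc $C$ in a single segment, so the ideal vertices $q_1,\dots,q_k$ are distinct --- follows from the fact that $C$ is a neighbourhood of a finite subtree of $T_3$ and the triangles of $E$ incident to a given dyadic point are dual to a path in $T_3$, whose intersection with any subtree is connected.
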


\begin{corollary}
One can deform continuously $\Sigma$ to $\Sigma'$ by an homotopy
$H: \Sigma \times [0,1] \rightarrow \mathbb{D}^2 \cup \partial\mathbb{D}^2$ 
such that:
\begin{itemize}
\item For $t < 1$, $H(\cdot,t)$ is an homeomorphism.
\item For $t < 1$, $H(\Sigma\cap I_a^n,t) \subset I_a^n$.
\item $H(\cdot,1)$ is an homeomorphism between the interior of $\Sigma$
and $\mathbb{D}^2$.
\item Let $\partial_0 \Sigma$ be a connected component of the boundary of 
$\Sigma$ and let $q$ be the dyadic rational number on its adherence. Then,
$H(\partial_0\Sigma,1)=q$.
\end{itemize}
\label{homotopy}
\end{corollary}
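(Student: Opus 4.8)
The plan is to build $H$ hexagon by hexagon over the tessellation $E$ of $\Sigma$ and then glue the pieces together. First I would set up the combinatorial dictionary relating the two cell structures. Each hexagon $h$ of the tessellation corresponds to a unique vertex of the dual tree $T_3$, hence to a unique ideal triangle $\Delta$ of the triangulation $E$ of $\Sigma'$ (Remark \ref{triang}). By construction the boundary of $h$ alternates three \emph{arc sides}, lying on three geodesics $I_{a}^{n}$, $I_{b}^{m}$, $I_{c}^{p}$ that bound $\Delta$, with three \emph{boundary sides}, each contained in a component of $\partial\Sigma$. By Lemma \ref{bijection} each of these boundary components has a single dyadic rational in its adherence, and Corollary \ref{extreme} (together with Corollary \ref{polygons}) guarantees that these three dyadics are precisely the three ideal vertices of $\Delta$, the vertex opposite an arc side being the common endpoint of the two geodesics flanking it. Thus the target of each hexagon is unambiguously its ideal triangle, with arc sides sent onto geodesic sides and boundary sides collapsed onto ideal vertices.

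Second, I would construct a single local homotopy on a closed hexagon. Choosing a model homeomorphism $\phi_h$ from $h$ onto $\Delta$ that sends each arc side onto the corresponding geodesic side of $\Delta$ and crushes each boundary side to the corresponding ideal vertex (such a map exists since a closed hexagon is homeomorphic to a closed triangle with three alternate edges shrunk to its vertices), I would interpolate between the inclusion $h \hookrightarrow \mathbb{D}^2$ at $t=0$ and $\phi_h$ at $t=1$ by a family $H_h(\cdot,t)$ that keeps each arc side inside its geodesic $I_a^n$ for all $t$ and is a homeomorphism onto its image for $t<1$. Parametrizing the target by the hyperbolic geometry of $\Delta$, equivalently by a fixed reference triangle, lets me make this interpolation explicit and, crucially, uniform: the same recipe is applied in every hexagon after transporting it to one fixed model.

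Third, I would verify compatibility along shared sides so that the local homotopies glue. Two adjacent hexagons meet along an arc side lying on a common geodesic $I_a^n$; if the parametrization $\Sigma\cap I_a^n \to I_a^n$ used in the interpolation is chosen to depend only on the geodesic and not on the hexagon, the two local homotopies agree on the shared side for every $t$, and the $H_h$ assemble into a continuous map $H$ on $\Sigma\times[0,1]$. Properties (2) and (4) are then immediate from the local model: arc sides stay within their geodesics, and each boundary component of $\Sigma$, being a union of boundary sides running along a path in $T_3$, is collapsed at $t=1$ to the single dyadic attached to it by Lemma \ref{bijection}. For properties (1) and (3) I would show that $H(\cdot,t)$ is a continuous injection and invoke invariance of domain: for $t<1$ it is a homeomorphism of $\Sigma$ onto its image, and at $t=1$ it restricts to a continuous bijection from the open surface $\operatorname{int}\Sigma$ onto the open disc $\mathbb{D}^2$, hence to a homeomorphism.

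The main obstacle I expect is global continuity at $t=1$ and ``at infinity'' in the tree: one is simultaneously crushing infinitely many bi-infinite boundary lines to points and inflating the interior to fill the whole disc, while the hexagons accumulate on $\partial\mathbb{D}^2$. The delicate point is to check that $H$ remains continuous on all of $\Sigma\times[0,1]$ across this accumulation --- in particular that hexagons deep in the tree have uniformly small image diameter, so that each boundary line genuinely limits onto one dyadic point --- and that $H(\cdot,1)$ really is a bijection onto $\mathbb{D}^2$ rather than merely onto a dense subset. Exploiting the exact self-similarity of the tessellation under the tree action to keep every local model uniform is what I would use to bring this accumulation under control.
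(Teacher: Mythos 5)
The paper states Corollary \ref{homotopy} without proof, treating it as an evident consequence of Lemma \ref{bijection} and Corollaries \ref{extreme} and \ref{polygons}, so there is no official argument to compare against; your hexagon-by-hexagon construction is a correct and reasonably complete way to supply one. The combinatorial dictionary (hexagon $\leftrightarrow$ ideal triangle, arc side $\leftrightarrow$ geodesic side, boundary side $\leftrightarrow$ ideal vertex equal to the common endpoint of the two flanking geodesics) is exactly the right use of Lemma \ref{bijection} and Corollary \ref{extreme}, and the gluing condition --- parametrize $\Sigma\cap I_a^n \to I_a^n$ once per geodesic rather than once per hexagon --- is the correct compatibility requirement. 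One remark on your ``main obstacle'': the continuity worry at infinity is milder than you suggest, because $\Sigma$ is a closed subset of the \emph{open} disc and the hexagons form a locally finite family there, so every point of $\Sigma\times[0,1]$ has a neighbourhood meeting only finitely many local models and continuity of $H$ is a purely local matter; no uniform diameter estimate is needed for continuity. The genuinely global points are (i) surjectivity of $H(\cdot,1)$ onto $\mathbb{D}^2$, which holds because the closed ideal triangles of $E$ tile $\mathbb{D}^2$ and each hexagon (minus its boundary sides) is sent onto its triangle minus its ideal vertices, and (ii) the passage from continuous bijection to homeomorphism at $t=1$, for which invariance of domain suffices, as you say. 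With those two points made explicit, your argument is complete.
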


One can now define the notion of asymptotically rigid homeomorphism of
$\Sigma'$, and the asymptotic mapping class group by taking 
$Homeo_a(\Sigma')$ modulo isotopies of $\Sigma'$.

\begin{proposition}
The asymptotic mapping class groups of $\Sigma$ and $\Sigma'$
are isomorphic.
\end{proposition}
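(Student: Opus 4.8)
The plan is to realize the isomorphism concretely through the deformation $H$ of Corollary \ref{homotopy}, using the bijection of Lemma \ref{bijection} to keep track of the boundary. Write $H_t = H(\cdot,t)$. For $t<1$ each $H_t$ is a homeomorphism of $\Sigma$ onto its image, while $H_1$ restricts to a homeomorphism between the interior of $\Sigma$ and $\mathbb{D}^2 = \mathrm{int}(\Sigma')$ and collapses every boundary component $\partial_0\Sigma$ to the single dyadic point $q \in \partial\mathbb{D}^2$ lying in its adherence. I would use $H_1$ to transport homeomorphisms across: given $\varphi \in Homeo_a(\Sigma)$, define $\Phi(\varphi)$ to be $H_1 \circ \varphi \circ H_1^{-1}$ on $\mathrm{int}(\Sigma')$ and to act on the dyadic points of $\partial\mathbb{D}^2 \subset \Sigma'$ by the permutation that $\varphi$ induces on the boundary components of $\Sigma$, read through the bijection of Lemma \ref{bijection}.

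The first thing to check is that $\Phi(\varphi)$ is a well-defined homeomorphism of $\Sigma'$. On the interior this is immediate, being a composition of homeomorphisms. The real content is continuity and bijectivity at the added dyadic points. Here I would exploit that $H_t$ preserves the triangulation in the strong sense $H(\Sigma\cap I_a^n,t)\subset I_a^n$, so that $E$ is carried along the whole deformation; together with asymptotic rigidity this controls $\varphi$ near the ends of $\Sigma$. Concretely, a dyadic point $q$ is approached inside $\Sigma'$ by the nested hexagons of the tessellation accumulating at the corresponding boundary component, and asymptotic rigidity forces $\varphi$ to send this nested family \emph{naturally} to the nested family accumulating at the image component; passing to the limit under $H_1$ shows that $\Phi(\varphi)$ extends continuously to $q$ with value the dyadic point associated to $\varphi(\partial_0\Sigma)$. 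Applying the same argument to $\varphi^{-1}$ produces a continuous inverse, so $\Phi(\varphi)$ is a homeomorphism, and it is asymptotically rigid because $\varphi$ is and $H$ respects the combinatorial structure used to define rigidity.

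Next I would verify that $\Phi$ is a group homomorphism, which is transparent from the defining formula on the interior and from the equivariance of the bijection of Lemma \ref{bijection}, and that $\Phi$ descends to the quotients by isotopy. An isotopy of $\Sigma$ relative to the boundary is conjugated by $H_1$ to an isotopy of $\Sigma'$, and conversely every isotopy of $\Sigma'$ pulls back; hence $\Phi$ induces a well-defined homomorphism between the asymptotic mapping class groups. For bijectivity I would construct the inverse by running the deformation backwards: an asymptotically rigid homeomorphism $\psi$ of $\Sigma'$ restricts to a homeomorphism of $\mathbb{D}^2$ permuting the dyadic points, and $H_1^{-1}\circ\psi\circ H_1$ defines a homeomorphism of $\mathrm{int}(\Sigma)$ which, by the same nested-hexagon analysis, extends across the boundary components to an asymptotically rigid homeomorphism of $\Sigma$; one then checks the two assignments are mutually inverse on isotopy classes.

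I expect the main obstacle to be exactly the extension across the collapsed boundary: showing that the interior conjugate $H_1\circ\varphi\circ H_1^{-1}$ genuinely extends to a homeomorphism of $\Sigma'$, and not merely to a continuous surjection, requires a careful local model of how the hexagons of $E$ shrink toward each dyadic point. It is here that Corollaries \ref{extreme} and \ref{polygons}, which pin down the boundary behaviour of the arcs $I_a^n$, do the essential work.
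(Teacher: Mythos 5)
Your proposal is correct and follows essentially the same route as the paper: the forward map is exactly the paper's $\varphi'(x)=H(\varphi(y),1)$ on the interior together with the induced permutation of dyadic boundary points via Lemma \ref{bijection}, and the inverse is obtained by transporting back through $H$. Your treatment is somewhat more explicit about continuity at the collapsed boundary components (a point the paper's proof passes over), but this is a refinement of the same argument rather than a different one.
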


Furthermore, they are isomorphic to Thompson's group $\mathcal{T}$.

\begin{proof}
Let $\varphi \in \text{Homeo}_a(\Sigma)$. We define $\varphi' \in 
\text{Homeo}_a(\Sigma')$ as follows:
$$\varphi'(x) = \left\{
\begin{array}{cl}
H\left(\varphi(y),1\right), & \text{if } x \in \mathbb{D}^2\\
H\left(\varphi\left(\partial_x\Sigma\right),1\right), & \text{if } 
x \in \partial \Sigma',
\end{array}
\right.
$$
where $y \in \Sigma$ such that $H(y,1)=x$,
and $\partial_x\Sigma$ is the connected component of $\partial \Sigma$
having $x$ on its adherence and $H$ is the homotopy from lemma \ref{homotopy}. 
Note that every isotopy of $\Sigma$ can be extended to an isotopy of 
$\mathbb{D}^2 \cup \partial \mathbb{D}^2$.
Thus, the application we defined induces an injective morphism for
the mapping class groups.

Reciprocally, let $[\varphi']$ be an element of the asymptotic  
mapping class group
of $\Sigma'$. Note that we can choose a representative 
$\varphi' \in \text{Homeo}_a(\Sigma')$ such that 
$\varphi'_{|\Sigma} \in \text{Homeo}_a(\Sigma)$. Thus, we defined
an injective morphism from the asymptotic mapping class group
of $\Sigma'$ to the asymptotic mapping class group of $\Sigma$. 

Finally, one can verify that the composition of both is the identity. 
\end{proof}

Before giving the isomorphism between $\mathcal{T}$ and the asymptotic 
mapping class group of $\Sigma$, let us give the analytical definition of Thompson's group $\mathcal{T}$:  

\begin{definition}
The {\rm{Thompson group $\mathcal{T}$}} is the set of
orientation preserving piecewise linear homeomorphisms of the
circle $S^1=[0, 1]/_{0\sim1}$ whose points of non-differentiability
are dyadic rational numbers, whose derivatives (where defined)
are powers of 2, which send the set of dyadic rational numbers to itself.
\end{definition}

Let $\varphi$ be a representative of an equivalence classe 
of the asymptotic mapping class group of $\Sigma'$. 
We claim that $\varphi$ acts as an element of the
Thompson group $\mathcal{T}$ on $\partial \Sigma'$, which is the set of
dyadic rational numbers, and that this action does not depend on the
representative. To see this, let 
$\{I_{a_1}^{n_1}, \ldots, I_{a_k}^{n_k}\}$ and
$\{I_{b_1}^{m_1}, \ldots, I_{b_k}^{m_k}\}$ be sets of separating arcs
associated to $\varphi$, with $\varphi(I_{a_i}^{n_i})=I_{b_i}^{m_i}$
for $1 \leq i \leq k$. Without loss of generality, we can
suppose that $a_1=0$, $a_k=2^{n_k}-1$ ($I_a^1$ is contained on the 
compact subsurface bounded by the arcs $I_{a}^{n}$) and that both sets 
are cyclically ordered. Let $f$ be the unique map of the circle 
satisfying the following conditions:

\begin{enumerate}
\item For all $1 \leq i \leq k, \, 
\displaystyle f\left(\frac{a_i}{2^{n_i}}\right)=
\frac{b_i}{2^{m_i}}$, 
\item For all $1 \leq i \leq k, \, 
\displaystyle f\left(\frac{a_i+1}{2^{n_i}}\right)=
\frac{b_i+1}{2^{m_i}}$, and
\item For all $\displaystyle x \in \left(\frac{a_i}{2^{n_i}},
\frac{a_i+1}{2^{n_i}}\right), \, f'(x) = 2^{n_i-m_i}$. 
\end{enumerate}

One can see that $f \in \mathcal{T}$ using the corollary \ref{polygons}
and the fact that $\varphi$ is orientation-preserving.

Reciprocally, given an element $f \in \mathcal{T}$ it can be proven 
(adaptation to $\mathcal{T}$ of lemma 2.2 from \cite{CFP}) that 
there exists a partition of the unit interval into standard dyadic
intervals $0=x_0 < x_1 < \ldots < x_k=1$,
and an integer $0 \leq j \leq k$ such that:
\begin{enumerate}
\item The subintervals of the partition 
$0=f(x_j) < f(x_{j+1}) < \ldots < f(x_k) 
= f(x_0) < \ldots < f(x_{j-1}) <f(x_j)+1 = 1$ are standard dyadic, and
\item The map $f$ is linear in every subinterval of the partition.
\end{enumerate}
Thus, we can find two ideal $k$-gons inscribed in $E$ defining
an element $\varphi$ in the asymptotic mapping class group 
of $\Sigma'$ which acts as $f$ on $\partial\Sigma'$. This ends the
proof of the following result 
(see \cite{braidedthompson}): 

\begin{proposition} 
The asymptotic mapping class group of $\Sigma$ is isomorphic
to the Thompson group $\mathcal{T}$.
\end{proposition}

For the purposes of this paper it is useful to think of the Thompson
group $\mathcal{T}$ as the group generated by two elements: 
an element $\alpha$ of order 4 and an element $\beta$ of order 3. 
The full presentation is the following
(for the details see \cite{braidedthompson} and \cite{lochak}):

$$\mathcal{T} \simeq \left<\alpha, \beta \, \vert \, \alpha^4, \beta^3, 
\left[\beta\alpha\beta,\alpha^2\beta\alpha\beta\alpha^2\right],
\left[\beta\alpha\beta,\alpha^2\beta^2
\alpha^2\beta\alpha\beta\alpha^2
\beta\alpha^2\right],
\left(\beta\alpha\right)^5\right>.$$

The two generators $\alpha, \beta$ of Thompson's group $\mathcal{T}$
can be defined in terms of polygons as follows: $\alpha$ sends 
$\{I_{0}^{2},I_{1}^{2},I_{2}^{2},I_{3}^{2}\}$ respectively to
$\{I_{1}^{2},I_{2}^{2},I_{3}^{2},I_{0}^{2}\}$ and $\beta$ sends
$\{I_{0}^{1},I_{0}^{2},I_{1}^{2}\}$ respectively to
$\{I_{0}^{2},I_{1}^{2},I_{0}^{1}\}$.  

Remark that the polygons defining an element of $\mathcal{T}$ are not unique,
although there is a minimal one which satisfies the conditions. 
Consider this minimal pair of polygons defining an element 
$f\in \mathcal{T}$. Then, if we split a standard dyadic interval of 
the source polygon in two halves and we also split the standard dyadic
interval corresponding to its image in two halves, the element defined 
by the new pair of polygons is the same. All possible pairs of
polygons defining $f$ are obtained doing this expanding operation 
finitely many times. 

\section{The asymptotic pants complex}

First of all, we would like to present the relation of the
complex $\mathcal{C}$ and the different known complexes for
the compact case. In some general sense, one
can see $\mathcal{C}$ as an analogue to the pants complex. 
For this, take another copy of the initial surface $\Sigma$, already
tessellated in hexagons by $E$, and glue the two copies together along 
their boundary. Now, the arcs of $E$ on the two copies are also glued 
and they become simple curves which decompose the doubled surface in
infinitely many pairs of pants. Hence, the vertices of the complex
$\mathcal{C}$ can be seen as decompositions in pairs of pants of 
the double surface, obtained from the initial decomposition by a finite 
number of elementary moves. An elementary move consists on changing a 
simple curve $\gamma$ by another $\gamma'$, which does not intersect the
other curves of the pants decomposition, and with minimal geometric 
intersection number $i([\gamma],[\gamma'])=2$. In our case, we 
only consider the elementary moves that can be seen on the planar surface 
$\Sigma$. For the details of this construction see \cite{universalmcg}
and \cite{braidedthompson}.  

The rest of this section is devoted to give another point of view of
the complex $\mathcal{C}$ using the definition of $\Sigma'$.
For example, we can see the 0-skeleton of 
$\mathcal{C}$ as the set of triangulations of $\Sigma'$ with vertices in 
$\partial \Sigma'$ which differ from $E$ only on a finite number of
triangles. Hence, two vertices $u,v$ of $\mathcal{C}$ 
are joined by an edge if we can obtain $v$ from $u$ by changing the 
diagonal of a quadrilateral inscribed on the triangulation $u$. 
We will that $v$ is obtained by \textit{flipping an arc of} $u$.   

\begin{remark} \label{cells} 
{\rm{(Geometric interpretation of the 2-cells) 
Each 2-cell of the 
complex can be seen, geometrically, as the object obtained from a 
triangulation $v$ of the complex $\mathcal{C}$ by flipping consecutively 
two of its arcs. If there exists a triangle in $v$ containing the two arcs 
on its boundary, then the 2-cell is pentagonal. Otherwise, the 2-cell is
squared.}}
\end{remark} 

\begin{remark} \label{adjacent} 
{\rm{Given three vertices $u,v,w$ of $\cp$ where $u,w$ are different 
and adjacent to $v$, there exists a unique 2-cell containing $u,v,w$. 
This follows from the precedent remark. }}
\end{remark}

\begin{definition}
We call the 2-dimensional cellular complex $\mathcal{C}$ the
{\rm{asymptotic pants complex}} of the surfaces $\Sigma$ and $\Sigma'$.
\end{definition}

It is worth to mention that the asymptotic pants complex is closely related
to the graph of triangulations of a convex $n$-gon (see, for example,
\cite{flipgraph}, \cite{lee} and \cite{hurtadoNoy}) and hence, it is also 
related to the rotation distance in binary trees (see \cite{STT}).
The graph of 
triangulations of a convex $n$-gon is a graph whose vertices are the 
triangulations of the $n$-gon in $n-2$ triangles, and where
two vertices are joined by an edge if there is a single flip going from
one to the other. Thus, the asymptotic pants complex can be seen as a 
2-dimensional complex associated to the triangulations graph of an
infinite-sided convex polygon. Now, one can derive the connectivity of
the complex $\mathcal{C}$ from the finite case (see \cite{hurtadoNoy} 
for a simple proof). To see this, one can think that every vertex of 
$\mathcal{C}$ has only a finite number of arcs not contained in the
triangulation $E$ and so, there exists a convex polygon having all of this
arcs on its interior. Furthermore, Lee in \cite{lee} proved
that the complex of triangulations of a convex $n$-gon is a convex polytope
of dimension $n-3$ (see also \cite{devadossRead}). Thus, the complex 
$\mathcal{C}$ is also simply connected.

\begin{proposition}
The complex $\mathcal{C}$ is connected and simply connected.
\end{proposition}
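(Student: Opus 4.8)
The plan is to reduce both statements to the known combinatorics of the flip complex of a convex polygon, exploiting the fact that every vertex of $\mathcal{C}$ agrees with the distinguished triangulation $E$ outside a compact region. Given any finite set of vertices $v_1,\ldots,v_m$ of $\mathcal{C}$, each $v_i$ differs from $E$ in only finitely many arcs; the union of all these arcs is therefore a finite family, so by Corollary \ref{polygons} it is contained in the interior of a single ideal convex polygon $P$ inscribed in $E$. Let $\mathcal{C}_P \subset \mathcal{C}$ denote the full subcomplex spanned by those vertices of $\mathcal{C}$ whose triangulation agrees with $E$ outside $P$. By construction $v_1,\ldots,v_m \in \mathcal{C}_P$, and since any flip between two such triangulations takes place inside $P$, the complex $\mathcal{C}_P$ is closed under passing to the $2$-cells of $\mathcal{C}$ that it spans.

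The second step is to identify $\mathcal{C}_P$ with the associahedron $K_P$ of the polygon $P$. The vertices of $\mathcal{C}_P$ are in bijection with the triangulations of $P$ (the complementary triangulation being forced to be $E$), its edges correspond exactly to flips of a single diagonal of $P$, and by Remark \ref{cells} its $2$-cells are the pentagons arising from two flips that share a triangle together with the squares arising from two disjoint flips. These are precisely the vertices, edges and $2$-faces of the associahedron $K_P$ studied by Lee in \cite{lee}. Thus $\mathcal{C}_P$ is canonically the $2$-skeleton of $K_P$.

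For connectivity, apply this construction with $m=2$: any two vertices $u,v$ lie in a common $\mathcal{C}_P$, and since the flip graph of a convex polygon is connected (\cite{hurtadoNoy}) there is an edge-path in $\mathcal{C}_P \subset \mathcal{C}$ joining them. For simple connectivity, let $\gamma$ be any edge-loop in $\mathcal{C}$; it visits only finitely many vertices, so the same construction places $\gamma$ inside a single $\mathcal{C}_P$. By Lee's theorem \cite{lee} (see also \cite{devadossRead}) the associahedron $K_P$ is a convex polytope, hence contractible and in particular simply connected; since the fundamental group of a CW complex depends only on its $2$-skeleton, the $2$-skeleton $\mathcal{C}_P$ is simply connected as well. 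Therefore $\gamma$ is null-homotopic already in $\mathcal{C}_P$, and a fortiori in $\mathcal{C}$.

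The main technical point to secure is the identification in the second step: one must verify that the squared and pentagonal $2$-cells attached to the $1$-skeleton of $\mathcal{C}$ via Remark \ref{cells} match exactly the two combinatorial types of $2$-dimensional faces of $K_P$, so that $\mathcal{C}_P$ is genuinely the full $2$-skeleton of $K_P$ and not merely its $1$-skeleton. Note that $K_P$ has dimension $|P|-3$, which is typically much larger than $2$, so only the $2$-skeleton can be realized inside the two-dimensional complex $\mathcal{C}$; this is exactly why the argument passes through the fact that $\pi_1$ sees only the $2$-skeleton. Once this matching is established, both conclusions follow immediately from the finite case.
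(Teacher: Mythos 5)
Your argument is correct and follows essentially the same route as the paper: reduce to a finite convex polygon containing all arcs where the relevant triangulations differ from $E$, invoke the connectivity of the flip graph of a convex polygon from \cite{hurtadoNoy}, and deduce simple connectivity from Lee's theorem \cite{lee} that the associahedron is a convex polytope. Your write-up is in fact somewhat more careful than the paper's sketch, since you make explicit both the identification of the $2$-cells of $\mathcal{C}$ with the $2$-faces of the associahedron and the fact that $\pi_1$ depends only on the $2$-skeleton.
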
 

Finally, we will define the action of $\mathcal{T}$ on $\mathcal{C}$. 
Let $v$ be a vertex of $\mathcal{C}$ and let $f$ be an element of 
$\mathcal{T}$. Consider the set of geodesics $G$ defined as follows: 
the geodesic joining the dyadic rational numbers $p,q$ is in $G$ 
if the geodesic joining the dyadic rational numbers $f^{-1}(p)$ and 
$f^{-1}(q)$ belongs to the triangulation $v$. Remark that the geodesics
on $G$ define a triangulation of $\Sigma'$; this triangulation will
be the image by $f$ of the vertex $v$ and we will denote it by $f \cdot v$.
As a consequence of the definitions of $\mathcal{T}$ 
and $\mathcal{C}$, the triangulation $f\cdot v$ corresponds to a vertex 
of $\mathcal{C}$. Note that it is possible to find an ideal polygon bounded 
by the set of separating arcs $\{I_{a_1}^{n_1}, \ldots, I_{a_k}^{n_k}\}$
which is the source polygon for the element $f$ and which contains all 
the geodesics that are on the triangulation $v$ and which are not on
the base triangulation $E$.

Let $v,w$ be two vertices joined by an edge in $\mathcal{C}$, and let
$f$ be an element of $\mathcal{T}$. It follows from the definition of the 
action of $f$ on the vertices of $\mathcal{C}$ that $f\cdot v$ and 
$f\cdot w$ are joined 
by an edge in $\mathcal{C}$. It also follows from the definition of the 
action on the vertices together with the geometric interpretation of the 
2-cells (remark \ref{cells}) that the action can be extended to the 
2-skeleton of $\mathcal{C}$. Thus, we can define a natural map 
$\Psi: \mathcal{T} \rightarrow Aut(\mathcal{C})$. 

\begin{proposition}
The asymptotic mapping class group $\mathcal{T}$ of $\Sigma'$ acts 
transitively on the asymptotic pants complex $\mathcal{C}$ by 
automorphisms. Furthermore, the map 
$\Psi: \mathcal{T} \rightarrow Aut(\mathcal{C})$ is injective. 
\label{lemact}
\end{proposition}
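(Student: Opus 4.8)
The plan is to establish three separate claims: that $\Psi$ lands in $\text{Aut}(\mathcal{C})$ (i.e.\ each $f$ acts as a genuine automorphism), that the action is transitive on vertices, and that $\Psi$ is injective. The excerpt has already done most of the work for the first claim by verifying that the action sends vertices to vertices, edges to edges, and extends to the 2-skeleton; I would make this precise by checking that $\Psi$ is a group homomorphism (so that $\Psi(f)$ has inverse $\Psi(f^{-1})$ and is therefore bijective on cells). The key observation is that the action is defined via pre-composition by $f^{-1}$ on the dyadic endpoints: a geodesic $\overline{pq}$ lies in $f\cdot v$ iff $\overline{f^{-1}(p)f^{-1}(q)}$ lies in $v$. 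From this description the identity $(\Psi(f)\circ\Psi(g))\cdot v = \Psi(fg)\cdot v$ is immediate, and incidence of cells is preserved because $f$, being a homeomorphism of the circle preserving the dyadic structure, carries inscribed quadrilaterals to inscribed quadrilaterals and triangles sharing an arc to triangles sharing an arc. This handles the ``acts by automorphisms'' part.

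For transitivity, I would argue directly on the 0-skeleton, since an action by automorphisms that is transitive on vertices is transitive on the complex in the relevant sense. Given any vertex $v$, the goal is to find $f\in\mathcal{T}$ with $f\cdot v = E$ (the base triangulation), where $E$ corresponds to the identity. By the discussion preceding the proposition, the finitely many arcs of $v$ not belonging to $E$ are all contained in some ideal polygon bounded by a set of separating arcs $\{I_{a_1}^{n_1},\ldots,I_{a_k}^{n_k}\}$; these arcs triangulate that polygon. By Corollary~\ref{polygons}, this is an ideal hyperbolic $k$-gon, and I want to construct $f\in\mathcal{T}$ carrying this triangulation to the ``standard'' triangulation that $E$ induces on the corresponding region. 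The element $f$ is built exactly as in the isomorphism $\text{(asymptotic mcg)}\cong\mathcal{T}$: one prescribes $f$ to be the piecewise-linear circle homeomorphism matching the source dyadic subdivision to the target dyadic subdivision, with derivatives the appropriate powers of $2$. That $f$ genuinely lies in $\mathcal{T}$ follows from Corollary~\ref{polygons} together with orientation-preservation, precisely as argued earlier in the paper.

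For injectivity of $\Psi$, I would suppose $\Psi(f)=\text{id}$, so $f\cdot v = v$ for every vertex $v$, and deduce $f=\text{id}$ as an element of $\mathcal{T}$. Specialising to $v=E$, the fact that $f\cdot E = E$ means $f$ preserves the base triangulation, i.e.\ $f$ sends every standard dyadic geodesic $I_a^n$ to a standard dyadic geodesic; refining over all levels $n$, this forces $f$ to fix every dyadic rational on $\partial\Sigma'$, and since $\mathcal{T}$ acts faithfully on the dyadics (an element of $\mathcal{T}$ is determined by its values there, being piecewise linear), we get $f=\text{id}$. One must be a little careful that fixing $E$ setwise as a triangulation (rather than pointwise) already pins down $f$: but because $f\in\mathcal{T}$ is orientation-preserving and maps the dyadic set to itself monotonically, preserving the entire nested family of standard intervals leaves no freedom.

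The step I expect to be the main obstacle is verifying that $\Psi(f)$ preserves the \emph{2-cell structure} and not merely the 1-skeleton, that is, that adjacency of arcs (whether two arcs lie in a common triangle) is an invariant of the $f$-action, so that pentagonal cells map to pentagonal cells and squared cells to squared cells. Concretely, the subtlety is that the distinction between the two types of 2-cell (Remark~\ref{cells}) depends on the local combinatorics of the triangulation, and I must confirm that applying $f$ — which can drastically change the metric sizes and levels $n$ of the arcs — nevertheless preserves incidence relations among arcs and triangles. This reduces to the statement that $f$, viewed as acting on geodesics by its action on dyadic endpoints, is an isomorphism of the abstract incidence structure of the triangulation; granting that, Remark~\ref{adjacent} guarantees the unique 2-cell through a path $u,v,w$ is carried to the unique 2-cell through $f\cdot u, f\cdot v, f\cdot w$, and the extension to the 2-skeleton is forced.
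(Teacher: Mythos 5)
Your argument for injectivity contains a genuine error. You claim that $f\cdot E=E$ forces $f$ to fix every dyadic rational, because ``preserving the entire nested family of standard intervals leaves no freedom.'' This is false: the stabilizer of the base vertex $E$ under the $\mathcal{T}$-action is nontrivial. For instance, the order-$3$ generator $\beta$, which cyclically permutes $\{I_0^1,I_0^2,I_1^2\}$, sends every standard dyadic geodesic to a standard dyadic geodesic (it is the rotation of the dual tree $T_3$ about the vertex dual to the triangle with vertices $0,\tfrac14,\tfrac12$), so $\beta\cdot E=E$; yet $\beta(0)=\tfrac14\neq 0$. An orientation-preserving element of $\mathcal{T}$ may permute the nested family of standard dyadic intervals nontrivially, exactly as $T_3$ admits nontrivial automorphisms. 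So specialising to $v=E$ cannot yield injectivity. Your overall strategy (if $\Psi(f)=\mathrm{id}$ then $f$ fixes \emph{every} vertex) is salvageable, but you must exploit a vertex other than $E$: the paper takes a dyadic $q$ with $f(q)\neq q$, chooses a quadrilateral inscribed in $E$ with vertex $q$ and disjoint from its $f$-image, and lets $v$ be $E$ with that diagonal flipped; then the unique ``non-standard'' arc of $f\cdot v$ sits in a different quadrilateral, so $f\cdot v\neq v$. Without some such step the injectivity claim is unproved.

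A secondary, smaller gap is in transitivity. You say $f$ is ``built exactly as in the isomorphism'' by matching the source dyadic subdivision to a target one, but an element of $\mathcal{T}$ determined by matching the \emph{sides} of two ideal polygons $P\to Q$ sends the interior arcs of $v|_P$ to the combinatorially identical triangulation of $Q$; for the image to be $E$ you need a polygon $Q$ whose $E$-induced triangulation is combinatorially isomorphic to $v|_P$ with a compatible matching of sides. Such a $Q$ exists, but its existence is precisely the content of the paper's recursive construction (associating to each triangle of $v|_P$ a triangle of $E$ of the form $\{\tfrac12,0,\tfrac14\}$ or $\{\tfrac34,0,\tfrac12\}$ and iterating); your proposal asserts the conclusion without supplying this step. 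The remaining parts of your proposal (the action by automorphisms via the endpoint description, and preservation of the two types of $2$-cells via Remark~\ref{cells}) follow the paper's line and are fine.
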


\begin{proof}
It follows from the paragraph preceding the proposition that $\mathcal{T}$
acts on $\mathcal{C}$ by automorphisms. We prove the transitivity by showing
that given a vertex $v$ of $\mathcal{C}$ there exists an element 
$f \in \mathcal{T}$ such that $f \cdot E = v$. Let $P$ be an ideal
polygon inscribed on $E$ containing all the edges of the triangulation 
$v$ which are not on the triangulation $E$, and having at least the
dyadics $0,\frac{1}{4},\frac{1}{2},\frac{3}{4}$ on its boundary. 
Let $a$ be an interior edge of the triangulation 
of $P$ in $v$ joining the dyadics $p$ and $q$. Suppose that $p >q$. There
exists a unique triangle in $P$ having as vertices $p, x, q$, where
$x$ is a dyadic number $p > x > q$. We associate the triangle of $E$
with vertices $\frac{1}{2}, 0, \frac{1}{4}$ to this triangle. Analogously,
we associate to the triangle with vertices $y > p > q$ (or $p > q > y$) 
of $P$ the triangle with vertices $\frac{3}{4},0,\frac{1}{2}$. 
Repeating this procedure finitely many times we construct an ideal polygon 
$Q$ triangulated as $E$,
whose sides are naturally associated to those of the polygon $P$. 
Then, the element $f \in \mathcal{T}$ defined by $f(Q)=P$ satisfies 
our claim. 

For the second assertion it is enough to prove that given an element 
$f \in \mathcal{T}$ different from the identity, there exists a vertex 
$v$ in $\mathcal{C}$ such that $f \cdot v \neq v$. Let $q$ be a dyadic 
such that $f(q) \neq q$ (it exists because $f \neq id$). Take a square 
inscribed in the triangulation $E$, having $q$ as a vertex and such that
it has empty intersection with its image by $f$. Now, let $v$ be the 
triangulation obtained from $E$ by flipping the diagonal of this square.
Then, $f\cdot v \neq v$.
\end{proof}

\begin{proposition}
The complex $\mathcal{C}$ is not Gromov hyperbolic. \label{hyp}
\end{proposition}

The proof uses the following observation:

\begin{remark}
{\rm{Every flip changes a single arc of the initial triangulation. 
Therefore, the distance between 
two vertices $v,w \in \mathcal{C}$ is bounded below by the number of arcs 
which are on the triangulation $v$ and are not in the triangulation $w$.
}} \label{distance}
\end{remark}

\begin{proof}
Let $n$ be a positive integer. We construct a geodesic triangle $u,v,w$
such that $d_{\mathcal{C}}(u,v)=d_{\mathcal{C}}(v,w)=n$, and 
$d_{\mathcal{C}}(u,w)=2n$;
and we give a point $p$ in the geodesic segment $uw$ such that 
$d_{\mathcal{C}}(p,x) \geq n$ for all $x \in uv \cup vw$.

Let $s_1,\ldots, s_{2n}$ be 2$n$ disjoint squares inscribed in the 
triangulation $E$. Let $u=E$, $v$ be the vertex obtained from $E$ by flipping 
the diagonals of the squares $s_1, \ldots, s_n$ and $w$ be the vertex obtained 
from $v$ by flipping the diagonals of the squares $s_{n+1}, \ldots, s_{2n}$.
The segments $uv$ and $vw$ consist on flipping, respectively, the diagonals 
of $s_1, \ldots, s_n$, and $s_{n+1}, \ldots, s_{2n}$, in order. The segment
$uw$ consists on flipping first the diagonal of the square $s_{2n}$, then
the diagonal of $s_{2n-1}$ until the diagonal of $s_1$, in decreasing order
(see figure \ref{triangle}).
The point $p$ is the vertex obtained from $E$ by flipping the diagonals of
$s_{n+1}, \ldots, s_{2n}$. By the remark \ref{distance} all these segments are
geodesics, and 
$d_{\mathcal{C}}(p,x)=n+\min\{d_{\mathcal{C}}(u,x),d_{\mathcal{C}}(w,x)\}$,
for all $x \in uv \cup vw$.
\end{proof}

\begin{figure} [H]
\begin{center}
\includegraphics[width=10cm]{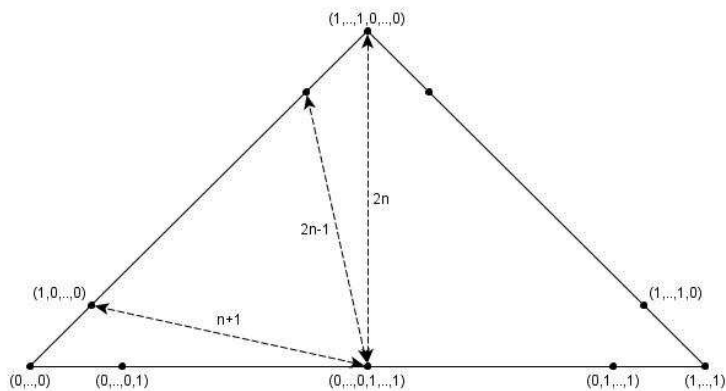}
\caption{The points correspond to triangulations obtained from $E$ by
flipping the diagonals of $s_i$ if there is a one in the $i$-th coordinate.
Discontinuous lines represent distances in $\mathcal{C}$.}
\label{triangle}
\end{center}
\end{figure}

\section{The automorphism group of the complex $\mathcal{C}$}

In this section we study the group of automorphisms $\acp$ 
of the asymptotic pants complex $\mathcal{C}$, and we prove that it is
isomorphic to the semi-direct product 
$\mathcal{T} \rtimes \mathbb{Z}/2\mathbb{Z}$.

The proof of this result has some similarities with Ivanov's proof
(see \cite{ivanov}) of the fact that the automorphism group of the 
arc complex of a compact surface is isomorphic to the extended 
mapping class group. 
In particular, he showed that the image of a maximal simplex determines
completely an automorphism. In our case, every automorphism is determined by
its image of the ball of radius one centred in some vertex $v$. 
Then, given an automorphism $\varphi$ of the complex $\mathcal{C}$, 
we construct an element $t \in \Psi(\mathcal{T}) \subset \acp$ 
such that $t(E)=\varphi(E)$. 
For this step, we need to introduce an auxiliary
complex, which is related to the link of $v$ in $\mathcal{C}$.  

\subsection{Construction of the link complex $\Ddv$}

Recall that in any triangulation $v$ of $\Sigma'$, the flips along two 
different arcs of $v$ do not commute if and only if both arcs belong to 
the boundary of some triangle of $v$ (remark \ref{cells}).
We construct a sub-complex of $\mathcal{C}$ which encodes this 
information. 

Let $\Dv$ be a graph whose set of vertices is the set of vertices of 
$\mathcal{C}$ adjacent to the vertex $v$.
Two vertices are connected by an edge if they lie in the same pentagon
in $\mathcal{C}$.

\begin{figure}[H]
\begin{center}
\input{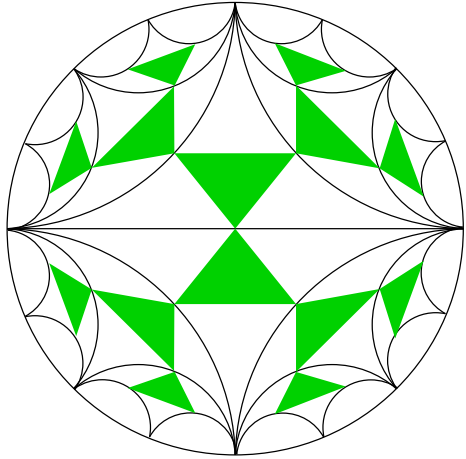}
\caption{The link of $E$.}
\label{linkE}
\end{center}
\end{figure}

Given a vertex of $\cp$ there is a geometric interpretation of $\Dv$: 
we associate to every arc $a$ of the triangulation $v$ a vertex 
which represents the vertex of $\cp$ obtained from $v$ by flipping
the arc $a$. Then, from the geometric interpretation of the 2-cells 
of $\cp$ (remark \ref{cells}) we see that two vertices of $\Dv$ 
associated to the arcs $a,b$ are joined by an edge if and only if there
is a triangle in $v$ having both $a,b$ on its boundary. Thus,
$\Dv$ can be seen as a connected union of triangles, 
each of them inscribed on a triangle of $v$, 
where the intersection between different triangles is either empty or a 
vertex, and where every vertex belong to two different triangles
(see figure \ref{linkE}). 

\begin{definition}
Let $v$ be a vertex of $\mathcal{C}$. The {\rm{link}} of $v$ is the
2-dimensional complex $\Ddv$ obtained from $\Dv$ by gluing a 2-cell on 
each triangle. Furthermore, the link lies on $\mathbb{D}^2$ and 
every triangle is naturally oriented.
\end{definition}

Remark that the link has been constructed using only the combinatorial
structure of $\mathcal{C}$ in the neighbourhood of a vertex. Note also that 
$\mathcal{C}$ is regular in the sense that all vertices have the same 
combinatorial structure on their neighbourhoods. 
Thus, one can derive the following result:

\begin{lemma} \label{lemind}
Let $v$ be a vertex of $\mathcal{C}$. Then, every automorphism 
$\phi \in \acp$ induces an isomorphism 
$\phi_{*,v}: \Ddv \rightarrow \Ddw$, where $w=\phi(v)$.
\end{lemma}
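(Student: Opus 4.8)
The plan is to exploit the fact, already emphasized in the surrounding text, that the link complex $\Ddv$ is built entirely from the combinatorial structure of $\cp$ in a neighbourhood of $v$: its vertices are the neighbours of $v$ in $\cp$, its edges record which pairs of neighbours lie in a common pentagon, and its $2$-cells are glued on the resulting triangles. Since an automorphism $\phi \in \acp$ is by definition a cellular automorphism of $\cp$, it transports this local data from $v$ to $w=\phi(v)$, and the induced map is the desired isomorphism $\phi_{*,v}\colon \Ddv \to \Ddw$. I would build it one skeleton at a time.

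First I would define the map on vertices. Because $\phi$ is a bijective cellular automorphism it preserves adjacency in the $1$-skeleton, so it restricts to a bijection between the neighbours of $v$ and the neighbours of $w$; these are exactly the vertex sets of $\Dv$ and $\Dw$, yielding a bijection $\phi_{*,v}$ on $0$-cells. Next I would verify that $\phi_{*,v}$ is an isomorphism of graphs $\Dv \to \Dw$. Let $a,b$ be two neighbours of $v$. By Remark~\ref{adjacent} there is a unique $2$-cell of $\cp$ containing the edges $va$ and $vb$, and by construction $a,b$ are joined in $\Dv$ precisely when this cell is a pentagon. As $\phi$ maps $2$-cells homeomorphically to $2$-cells, it carries the boundary cycle of this cell onto the boundary of the unique $2$-cell containing $w\phi(a)$ and $w\phi(b)$, preserving its number of sides. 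Hence $ab$ is an edge of $\Dv$ if and only if $\phi(a)\phi(b)$ is an edge of $\Dw$, so $\phi_{*,v}$ is a graph isomorphism.

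Finally I would promote this to an isomorphism of the $2$-complexes. By the geometric interpretation of $\Dv$, the graph is a connected union of triangles glued along vertices, with distinct triangles meeting in at most one point; consequently the triangles carrying the $2$-cells of $\Ddv$ are exactly the $3$-cycles of the graph $\Dv$, which is an intrinsic combinatorial notion. A graph isomorphism sends $3$-cycles to $3$-cycles, so $\phi_{*,v}$ matches the filled triangles of $\Ddv$ with those of $\Ddw$ and therefore extends over the $2$-cells, giving the isomorphism $\phi_{*,v}\colon \Ddv \to \Ddw$.

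The one point requiring care — and the only genuine obstacle — is the claim that $\phi$ cannot interchange squares and pentagons. Here I would be explicit that a cellular automorphism preserves the length of each boundary cycle, so the combinatorial type of every $2$-cell is invariant; without this the edge relation of $\Dv$ would not be correctly detected. Note that the statement asserts only an isomorphism of complexes, so we need not control the natural orientation of the triangles: whether $\phi_{*,v}$ respects it is left open, and is precisely the dichotomy that will later produce the factor $\mathbb{Z}/2\mathbb{Z}$.
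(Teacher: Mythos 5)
Your proof is correct and follows essentially the same route as the paper, which disposes of this lemma with the single observation that the link is built purely from the combinatorial structure of $\mathcal{C}$ in the neighbourhood of a vertex (so that any cellular automorphism, preserving adjacency and the number of sides of each $2$-cell, transports it); you have simply made that observation explicit skeleton by skeleton. The only point deserving one more line is your claim that the $3$-cycles of $\Dv$ are exactly the inscribed triangles: this does not follow formally from the description of $\Dv$ as triangles glued along vertices (three distinct triangles pairwise sharing a vertex could in principle create an extra $3$-cycle), but it is true here because three arcs of the triangulation $v$ that pairwise co-bound triangles must all lie on the boundary of one common triangle, since otherwise two edges of the triangulation would have to cross.
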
 

Now, using the transitivity of the action of $\mathcal{T}$ on 
$\mathcal{C}$ one obtains:

\begin{corollary}
Let $v$ and $w$ be vertices of $\cp$. Then, their links $\Ddv$ 
and $\Ddw$ are isomorphic. 
\end{corollary}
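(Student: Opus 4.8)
The plan is to deduce the corollary directly from Lemma \ref{lemind} together with the transitivity already established in Proposition \ref{lemact}. The corollary asserts that the links $\Ddv$ and $\Ddw$ of any two vertices are isomorphic as $2$-dimensional complexes, and the natural strategy is to produce an explicit automorphism of $\mathcal{C}$ carrying $v$ to $w$ and then invoke the induced isomorphism on links.

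First I would use transitivity: by Proposition \ref{lemact} the group $\mathcal{T}$ acts transitively on the vertices of $\mathcal{C}$ by automorphisms, so there exists $f \in \mathcal{T}$ with $f \cdot v = w$. Since the action is by automorphisms, the map $\Psi(f)$ is an element of $\acp$ sending $v$ to $w$. Then I would apply Lemma \ref{lemind} with $\phi = \Psi(f)$: it yields an induced isomorphism $\phi_{*,v} : \Ddv \rightarrow \Dd w$. Here $w = \phi(v)$ matches exactly the vertex produced by transitivity, so the induced map is precisely an isomorphism between the two links we wish to compare. This completes the argument in essentially one line once the two ingredients are cited.

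The only point requiring a little care is that Lemma \ref{lemind} is stated for a \emph{general} vertex $v$ and an \emph{arbitrary} automorphism $\phi$, so one must check that it applies to the specific automorphism $\Psi(f)$ coming from the Thompson group; but since $\Psi(f) \in \acp$ by the preceding discussion, there is nothing further to verify. I do not anticipate any genuine obstacle here: the corollary is a formal consequence of the lemma and transitivity, and the remark preceding Lemma \ref{lemind}---that the link is built purely from the local combinatorial structure of $\mathcal{C}$, which is the same at every vertex by regularity---already makes the statement intuitively clear. The proof I would write is therefore short: pick $f \in \mathcal{T}$ with $f \cdot v = w$ by transitivity, observe $\Psi(f) \in \acp$, and let $\Psi(f)_{*,v} : \Ddv \rightarrow \Ddw$ be the isomorphism supplied by Lemma \ref{lemind}.
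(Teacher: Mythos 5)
Your argument is correct and is exactly the paper's: the corollary is stated immediately after Lemma \ref{lemind} with the phrase ``using the transitivity of the action of $\mathcal{T}$ on $\mathcal{C}$,'' i.e.\ pick $f\in\mathcal{T}$ with $f\cdot v=w$ via Proposition \ref{lemact} and apply the induced link isomorphism from the lemma. Nothing to add.
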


\subsection{Extensions of link isomorphisms}

In the previous section we proved that every automorphism $\varphi$ of 
$\mathcal{C}$ induces link isomorphisms between $\Ddv$ and $\Ddw$, where
$w=\varphi(v)$. Now, given two vertices $v$ and $w$ of $\mathcal{C}$ and
given an isomorphism $i: \Ddv \rightarrow \Ddw$ between their links, it is 
natural to ask under which conditions one can find an automorphism 
$\varphi \in \acp$ such that $\varphi_{*,v}=i$. More specifically, 
which is the main obstruction to the existence of this automorphism?

Let $\varphi \in \acp$. Let $w$ be the image of $v$ by $\varphi$. Let
$(a,b,c)$ be three vertices of a triangle $\Delta$ on the link $\Ddv$, and 
suppose they are cyclically ordered according to the orientation of the
triangle. The vertices $\varphi(a),\varphi(b),\varphi(c)$ are the vertices of
a triangle $\phi(\Delta)$ in the link $\Ddw$. 
Now, there are two possible cases:
\begin{enumerate}
\item The orientation of $\varphi(\Delta)$ induces the cyclic order 
$(\varphi(a),\varphi(b),\varphi(c))$ on its vertices. Then we say that 
$\varphi$ is \textit{orientation preserving on $\Delta$}.
\item The orientation of $\varphi(\Delta)$ induces the cyclic order 
$(\varphi(a),\varphi(c),\varphi(b))$ on its vertices. Then we say that 
$\varphi$ is \textit{orientation reversing on $\Delta$}.
\end{enumerate}

\begin{definition}
Let $\varphi \in \acp$ and $v$ be a vertex of $\mathcal{C}$. We say that 
$\varphi$ is {\rm{$v$-orientation preserving (reversing)}} if it is 
orientation preserving (reversing) on every triangle of $\Ddv$. 
\end{definition}

\begin{remark}
{\rm{Every element $f \in \mathcal{T}$ induces an automorphism 
$\Psi(f) \in \acp$ (from proposition \ref{lemact}) which is 
$v$-orientation preserving for all
vertices $v$ of $\mathcal{C}$.}} \label{orientT}
\end{remark}

\begin{lemma}\label{lemauto}
Let $v$ and $w$ be vertices of $\mathcal{C}$, and $i: \Ddv \rightarrow \Ddw$
be an orientation preserving isomorphism of their links. Then, there exists 
a unique automorphism $\varphi_i \in \acp$ such that $\varphi_{*,v}=i$. 
Furthermore, $\varphi_i \in \mathcal{T}$.
\end{lemma}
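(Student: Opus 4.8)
The plan is to interpret the isomorphism $i:\Ddv\to\Ddw$ combinatorially and then realise it by an element of $\mathcal{T}$. By the geometric description of the link (remark \ref{cells}), a vertex of $\Ddv$ is an arc of the triangulation $v$, an edge records that two arcs bound a common triangle of $v$, and a $2$-cell is a triangle of $v$; an isomorphism of $2$-complexes therefore amounts to a bijection $\iota$ from the arcs of $v$ to the arcs of $w$ that sends triangles of $v$ to triangles of $w$ preserving incidence, while the hypothesis that $i$ is orientation preserving says precisely that $\iota$ preserves the cyclic order of the three arcs around each triangle. In other words, $i$ is the same datum as an orientation-preserving simplicial isomorphism between the ideal triangulations $(\Sigma',v)$ and $(\Sigma',w)$.

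First I would prove existence together with the membership in $\mathcal{T}$. Since $v$ and $w$ each differ from $E$ in only finitely many triangles, I choose (using corollary \ref{polygons}) ideal polygons $P$ and $P'$ inscribed in $E$ that contain all the arcs of $v$, respectively of $w$, not lying in $E$, large enough that $\iota$ carries the arcs of $v$ meeting $P$ to the arcs of $w$ meeting $P'$. Outside these polygons both triangulations coincide with $E$, so there $\iota$ is an orientation-preserving self-correspondence of the standard dyadic structure and is consequently dyadic-affine on each complementary standard interval. On the ideal vertices, $\iota$ induces a bijection $f$ of the set of dyadic rationals $\partial\Sigma'$: each dyadic $q$ is recovered from the cyclically ordered fan of arcs of $v$ incident to it, and orientation preservation sends this fan to the fan of $w$ about a unique dyadic $f(q)$, so that $f$ preserves the cyclic order. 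Being cyclic-order preserving, eventually standard, and dyadic on the finitely many vertices of $P$, the map $f$ extends to an orientation-preserving piecewise linear circle homeomorphism with dyadic breakpoints and power-of-two slopes, i.e. $f\in\mathcal{T}$; orientation preservation is exactly what keeps $f$ in $\mathcal{T}$ rather than in the extended group. By construction $f\cdot v=w$ and $\Psi(f)$ matches $\iota$ on arcs, hence $\Psi(f)_{*,v}=i$ by lemma \ref{lemind}; setting $\varphi_i=\Psi(f)$ gives the desired automorphism, which lies in $\mathcal{T}$.

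For uniqueness I would argue that an automorphism of $\cp$ is determined by its restriction to the ball of radius one about $v$, the rigidity principle underlying this section (playing the role of Ivanov's ``image of a maximal simplex''). If $\varphi,\varphi'\in\acp$ both send $v$ to $w$ and induce $i$ on $\Ddv$, then they agree on $v$, on every neighbour of $v$ (these are exactly the vertices of the link, on which $i$ prescribes the common value), and on every $2$-cell incident to $v$: by remark \ref{adjacent} each such $2$-cell is the unique one containing $v$ and two prescribed neighbours, so its image is forced. Thus $\varphi$ and $\varphi'$ coincide on the ball of radius one about $v$, and the rigidity principle gives $\varphi=\varphi'$.

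The main obstacle I expect is the rigidity-at-infinity step in the existence argument: showing that the vertex bijection $f$ induced by an abstract orientation-preserving isomorphism of the infinite links is genuinely dyadic-affine outside a compact region, so that it defines an element of $\mathcal{T}$ and not merely a cyclic-order-preserving self-map of the dyadics. The point is that outside the finitely many non-standard triangles both $v$ and $w$ equal the fixed triangulation $E$, and the only orientation-preserving isomorphisms of the standard fan structure of $E$ are the dyadic-affine ones; making this local normal form precise, and checking that the finitely many central values patch with it to yield the power-of-two slopes required for membership in $\mathcal{T}$, is where the real work lies.
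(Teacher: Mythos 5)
Your overall strategy coincides with the paper's: realise the orientation-preserving link isomorphism by an element of $\mathcal{T}$ built from a pair of matching ideal polygons (the paper phrases this via separating arcs and defines $\varphi$ by $\varphi(I_{a_j}^{n_j})=I_{b_j}^{m_j}$, you via the induced bijection of dyadic endpoints, but the content is the same), and then prove uniqueness by rigidity. Your honest flagging of the rigidity-at-infinity step in the existence half is well placed --- the orientation hypothesis is exactly what forces the link isomorphism to be the canonical rigid correspondence on the infinite standard fans outside the chosen polygons --- and the paper's ``it is easy to see that $\varphi_{*,v}=i$'' conceals essentially the same verification, so I would not count that against you.

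The genuine gap is in the uniqueness step. You assert that ``an automorphism of $\cp$ is determined by its restriction to the ball of radius one about $v$'' and call this ``the rigidity principle underlying this section''; but that principle is precisely what the uniqueness half of this lemma is meant to establish, so invoking it is circular. Your application of remark \ref{adjacent} only treats the $2$-cells incident to $v$, which yields agreement of the two automorphisms on the $1$-ball; it does not by itself propagate that agreement to all of $\cp$. The paper closes this by induction on the distance to $v$: assuming $\psi$ is determined on $\mathcal{B}_{\cp}(v,n)$, take $u$ at distance $n+1$ and a path $v,u_1,\dots,u_{n-1},u_n,u$; the triple $u_{n-1},u_n,u$ spans a unique $2$-cell (remark \ref{adjacent} applied away from $v$), that cell contains a fourth vertex $u'$ already lying in $\mathcal{B}_{\cp}(v,n)$, and $\psi(u)$ is then forced to be the vertex of the unique $2$-cell through $\psi(u'),\psi(u_{n-1}),\psi(u_n)$ which is adjacent to $\psi(u_n)$ and distinct from $\psi(u_{n-1})$. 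You already have the right tool in hand; you need to run this induction rather than cite its conclusion.
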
 

\begin{proof}
We first prove the existence by constructing $\varphi  \in \mathcal{T}$ in the 
following way: let $\{I_{c_1}^{d_1},\ldots,I_{c_l}^{d_l}\}$ a set of
separating arcs on the triangulation $w$ containing all the edges which
differ from $E$. Let $u'_1, \ldots, u'_l$ be the vertices of $\Dw$ 
corresponding to the triangulations obtained from $w$ by flipping, 
respectively, the edges $I_{c_1}^{d_1},\ldots,I_{c_l}^{d_l}$. Let
$\{I_{a_1}^{n_1},\ldots,I_{a_k}^{n_k}\}$ be separating arcs on the
triangulation $v$ containing on their interior 
$i^{-1}(u'_1),\ldots,i^{-1}(u'_l)$.
Let $u_1, \ldots, u_k$ be the vertices of $\Dv$ 
corresponding to the triangulations obtained from $v$ by flipping, 
respectively, the edges $I_{a_1}^{n_1},\ldots,I_{a_k}^{n_k}$. 
Then, $i(u_1),\ldots,i(u_k)$ are vertices corresponding to edges which 
form a set of separating arcs of $w$, 
$\{I_{b_1}^{m_1}, \ldots, I_{b_k}^{m_k}\}$. Define $\varphi$ by 
$\varphi(I_{a_j}^{n_j})=I_{b_j}^{m_j}$ for $1 \leq j \leq k$.
It is easy to see that $\varphi_{*,v}=i$. 

Uniqueness: this proof is based on the remarks \ref{cells} and \ref{adjacent}.
Let $\psi \in \acp$ such that $\psi_{*,v}=i$. Then, we prove that its
action on the 0-skeleton of $\mathcal{C}$ must coincide with $\varphi$.
\begin{itemize}
 \item $\psi(v)=w$.
 \item For every element $u$ on the unit sphere of $\mathcal{C}$ centred in  
 $v$, $\psi(u)$ must be $i(u)$.
 \item Suppose that $\psi$ is defined in the ball $\mathcal{B}_{\cp}(v,n)$
 of $\cp$ of radius $n$ centred in $v$
 for $n \geq 2$, and let $u$ be a vertex at distance $n+1$ of $v$. 
 Consider a path $p=v,u_1, \ldots, u_{n-1},u_n, u$ of length $n+1$ 
 joining $v$ and $u$. The vertices $u_{n-1},u_{n},u$ define a unique 2-cell 
 in $\cp$ (remark \ref{adjacent}), and this 2-cell contains
 at least a fourth vertex $u' \in \mathcal{B}_{\cp}(v,n)$.
 Thus, $\psi(u)$ must be the vertex of the unique 2-cell defined 
 by $\psi(u'),\psi(u_{n-1}),\psi(u_n)$ which is adjacent to $\psi(u_n)$
 and different from $\psi(u_{n-1})$.
\end{itemize}
\end{proof}

\begin{lemma}
Let $v$ and $w$ be vertices of $\cp$ and let $i: \Ddv \rightarrow \Ddw$
be an isomorphism of their links such that it is orientation reversing on
$\Delta_1=(u_0,u_1,u_2)$ and orientation preserving on 
$\Delta_2=(u_0,u_3,u_4)$. Then, there does not exist $\varphi \in \acp$ with
$\varphi_{*,v}=i$.
\label{transp}
\end{lemma}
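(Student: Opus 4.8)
The plan is to argue by contradiction, exploiting the rigidity established in Lemma \ref{lemauto}: once an automorphism is fixed on the link $\Ddv$, its values on the entire complex are forced by the propagation argument (through the uniqueness of 2-cells, remarks \ref{cells} and \ref{adjacent}). The key observation is that the notions of \emph{orientation preserving} and \emph{orientation reversing} on a triangle are not independent from triangle to triangle when the triangles share a vertex; adjacent triangles in $\Ddv$ are geometrically linked through the pentagons of $\cp$, and this forces a global consistency that a mixed isomorphism violates.

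Suppose, for contradiction, that such a $\varphi \in \acp$ exists with $\varphi_{*,v}=i$. First I would localize the contradiction to triangles sharing the vertex $u_0$. Since $\Ddv$ is a connected union of triangles in which every vertex belongs to exactly two triangles (as described just before the definition of the link), the triangles $\Delta_1=(u_0,u_1,u_2)$ and $\Delta_2=(u_0,u_3,u_4)$ both meet at $u_0$, and in fact they are precisely the two triangles incident to $u_0$. Geometrically, $u_0$ corresponds to flipping some arc $a$ of the triangulation $v$; this arc $a$ bounds two triangles $T_1,T_2$ of $v$, and $\Delta_1,\Delta_2$ are inscribed in $T_1,T_2$ respectively. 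The vertices $u_1,u_2$ (resp.\ $u_3,u_4$) correspond to the other two arcs of $T_1$ (resp.\ $T_2$).

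The main step is to show that the behaviour of $i$ on $\Delta_1$ and on $\Delta_2$ cannot differ. I would analyze how the automorphism $\varphi$ must act on the common arc $a$ and its two adjacent triangles. Since $\varphi$ is a genuine automorphism of $\cp$ arising as a homeomorphism-type symmetry, the local picture around the image arc $\varphi(a)=a'$ in $w$ is again a single arc bounding two triangles $T_1',T_2'$, and the orientations of these two triangles in $\mathbb{D}^2$ are coherent (both induced from the fixed orientation of the ambient disc, as noted in the definition of $\Ddv$). The crucial point is that a single automorphism of $\cp$ either preserves the ambient orientation of $\mathbb{D}^2$ globally or reverses it globally; it cannot do one thing on $T_1$ and the opposite on the adjacent $T_2$, because the shared arc $a$ forces the two local orientations to be compatible. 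Concretely, I would track the cyclic order of the three arcs of $T_1$ around the boundary of that triangle and compare it with the cyclic order after applying $\varphi$; the same comparison for $T_2$ must yield the \emph{same} sign of orientation behaviour, since $a$ is a side of both and $\varphi$ sends it consistently.

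I expect the main obstacle to be making rigorous the claim that the orientation sign is a well-defined global invariant of $\varphi$ rather than a per-triangle choice. The cleanest way to do this is to propagate: having a consistent orientation behaviour on the pair of triangles around $u_0$ forces (via the pentagon structure and the uniqueness of extensions in Lemma \ref{lemauto}) a consistent behaviour on all adjacent triangles, so the sign is locally constant on the connected complex $\Ddv$, hence constant. Then an isomorphism $i$ that reverses orientation on $\Delta_1$ but preserves it on $\Delta_2$ simply cannot be realized by any $\varphi$, since such a $\varphi$ would have to assign two different global orientation signs simultaneously. This contradiction establishes the lemma.
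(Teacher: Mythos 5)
Your overall strategy (contradiction, localizing at the shared vertex $u_0$, arguing that the orientation sign of $\varphi$ cannot change between the two triangles meeting there) matches the shape of the paper's argument, but the central step is missing and the justification you offer for it is circular. You write that ``a single automorphism of $\cp$ either preserves the ambient orientation of $\mathbb{D}^2$ globally or reverses it globally'' because $\varphi$ arises ``as a homeomorphism-type symmetry.'' An element of $\acp$ is only an automorphism of an abstract cell complex; that it behaves like a (possibly orientation-reversing) surface symmetry is precisely what Lemma \ref{transp} is needed to establish --- it is the content of Proposition \ref{classification}, which is proved \emph{from} this lemma. Likewise, the assertion that ``the shared arc $a$ forces the two local orientations to be compatible'' is a restatement of the conclusion: the hypothesis exhibits a perfectly good link isomorphism $i$ with mixed signs on $\Delta_1$ and $\Delta_2$, so nothing internal to the link $\Ddv$ rules it out; the obstruction must come from how the link sits inside $\cp$. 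Your closing paragraph acknowledges this and proposes to ``propagate'' consistency via the pentagon structure, but the mechanism of propagation --- why adjacency of triangles forces equality of signs --- is exactly the missing content, and citing Lemma \ref{lemauto} does not supply it: its uniqueness part only says an extension is determined \emph{if} it exists, and its existence part is proved only for orientation-preserving $i$.

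The concrete combinatorial obstruction the paper uses lies one step further out from $v$. First version: the paths $u_1,v,u_0$ and $u_4,v,u_0$ each lie on a pentagonal 2-cell; taking $u_5$ (resp.\ $u_6$) to be the vertex of the first (resp.\ second) pentagon adjacent to $u_0$ and different from $v$, the triple $u_5,u_0,u_6$ spans a pentagonal 2-cell. The rigidity argument of Lemma \ref{lemauto} forces the images $\varphi(u_5), i(u_0), \varphi(u_6)$, and the mixed orientation behaviour of $i$ places them on a \emph{squared} 2-cell, a contradiction. Second version (via Remark \ref{flipLinks}): in the link of $u_0$ the flip rewires the two triangles through $v$ so that $u_1$ and $u_4$ become adjacent, whereas the mixed signs of $i$ make $i(u_1)$ and $i(u_4)$ non-adjacent in the link of $i(u_0)$, contradicting Lemma \ref{lemind}. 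Either of these would complete your outline; without one of them the proposal does not yet constitute a proof.
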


\begin{proof}
Suppose there exists $\varphi \in \acp$ such that $\varphi_{*,v}=i$. 

\begin{figure}[H]
\begin{center}
\input{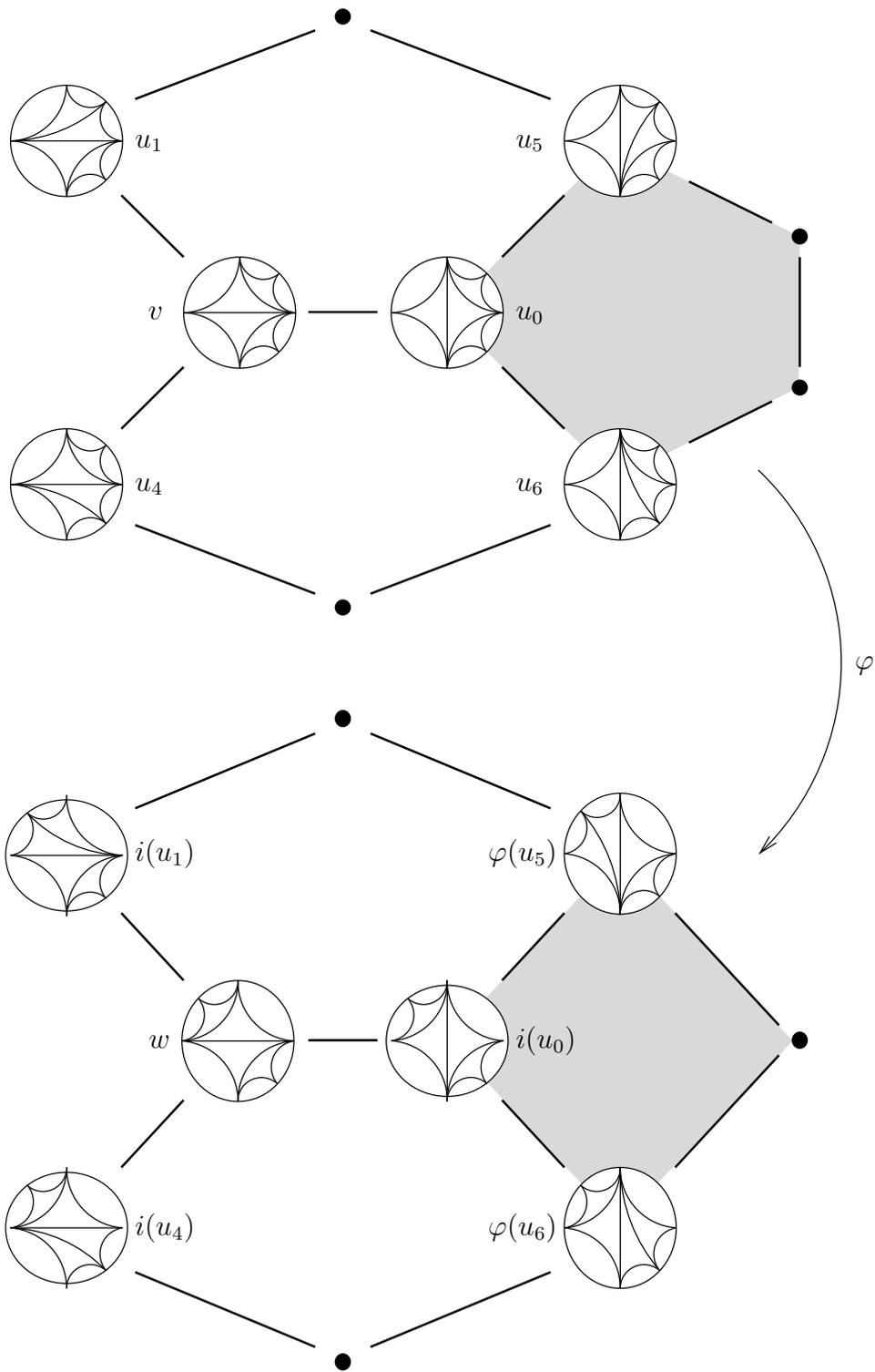}
\caption{Example of lemma \ref{transp}.}
\label{last}
\end{center}
\end{figure}

The vertices $u_1, v, u_0$ define a pentagonal 2-cell of $\mathcal{C}$;
let $u_5$ be the vertex at distance 1 of $u_0$ and different from $v$ on
this cell. Analogously, let $u_6$ be the vertex at distance 1 of $u_0$ 
different from $v$ and lying on the pentagonal 2-cell defined by the path 
$u_4,v,u_0$. The path $u_5, u_0, u_6$ defines a pentagonal 2-cell of 
$\mathcal{C}$. 

On the other hand, $i(u_1), i(v), i(u_0)$ also define a pentagonal 2-cell 
of $\mathcal{C}$. Adapting the prove of uniqueness of the lemma \ref{lemauto}, 
$\varphi(u_5)$ must be the vertex at distance 1 of $i(u_0)$ and different from 
$w$ on this pentagonal 2-cell. Furthermore, $\varphi(u_6)$ must be the vertex 
at distance 1 of $\varphi(u_0)$ different from $w$ and lying on the pentagonal 
2-cell defined by the path $i(u_4),i(v),i(u_0)$. But in this case the path 
$\varphi(u_5), \varphi(u_0), \varphi(u_6)$ defines a squared 2-cell of 
$\mathcal{C}$, which contradicts the fact that $\varphi \in \acp$. See figure
\ref{last} for a picture of the situation.
\end{proof}

\begin{remark}
{\rm{Let $v$ be a vertex of $\mathcal{C}$ and let $w$ be one of its neighbours 
in $\mathcal{C}$. Let $w,u_1,u_2$ and $w,u_3,u_4$ denote the triangles of
$\Ddv$ containing $w$. Observe that the geometric representation of the link 
$\Ddw$ of $w$ has the same vertices as $\Ddv$ and only the triangles mentioned 
above change to $w, u_1, u_4$ and $w, u_2, u_3$. See figures
\ref{linkE} and \ref{linkFlip}.}} \label{flipLinks}
\end{remark} 

\begin{figure} [H]
\begin{center}
\input{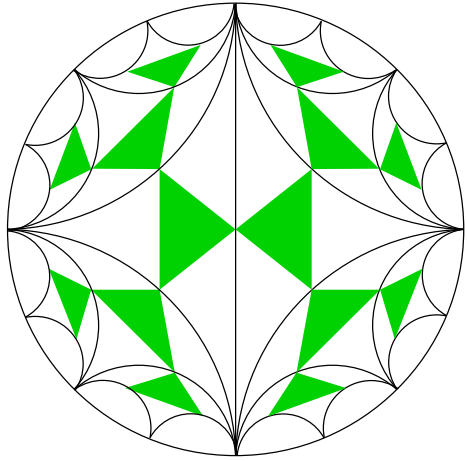}
\caption{The link of the vertex of $\mathcal{C}$ obtained from $E$
by flipping $I_0^1$.}
\label{linkFlip}
\end{center}
\end{figure}

This provides an alternative prove of lemma \ref{transp}:

\begin{proof}
Suppose there exists $\varphi \in \acp$ such that $\varphi_{*,v}=i$.
Then $\varphi_{*,u_0}$ must be an isomorphism between the link of $u_0$
and the link of $i(u_0)$. As a consequence of remark \ref{flipLinks},
$u_1$ and $u_4$ are adjacent in the link of $u_0$, but $i(u_1)$ and $i(u_4)$ 
are not adjacent in the link of $i(u_0)$ (see figure \ref{prova}), which 
contradicts the fact that $\varphi_{*,u_0}$ is an isomorphism. 
\end{proof}

\begin{figure}[H]
\begin{center}
\includegraphics[width=10cm]{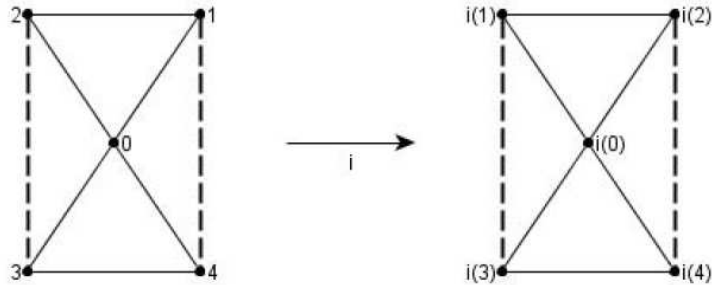}
\caption{The map $i$ in $\Delta_1$ and $\Delta_2$. In discontinuous
lines the links of $u_0$ and $i(u_0)$.}
\label{prova}
\end{center}
\end{figure}

\begin{lemma}
Let $i_R: \DdE \rightarrow \DdE$ be the orientation reversing isomorphism
obtained by the symmetry of axis $I_0^1$. Then, there exists a unique 
automorphism $\varphi_R \in \acp$ such that $\varphi_{R_{*,E}}=i_R$. 
\label{sym}
\end{lemma}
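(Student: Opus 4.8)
The plan is to realize $\varphi_R$ geometrically, exactly as the elements of $\mathcal{T}$ were realized in Section 3, but using an orientation-reversing symmetry of the disc. First I would introduce the hyperbolic reflection $\sigma$ of $\mathbb{D}^2$ whose axis is the geodesic $I_0^1$ (joining the dyadic points $0$ and $\tfrac12$). On the boundary circle this is the involution $\sigma(x)=-x \bmod 1$: it fixes $0$ and $\tfrac12$, preserves the set of dyadic rational numbers, and carries the geodesic through $p,q$ to the geodesic through $\sigma(p),\sigma(q)$. In particular $\sigma$ permutes the arcs of $E$ (one checks $I_a^n \mapsto I_{2^n-1-a}^n$, with $I_0^1$ fixed), so $\sigma$ preserves the triangulation $E$.

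Next I would let $\sigma$ act on the vertices of $\mathcal{C}$ by the rule already used for $\mathcal{T}$: the geodesic joining $p,q$ lies in $\sigma\cdot v$ if and only if the geodesic joining $\sigma(p),\sigma(q)$ lies in $v$ (using $\sigma=\sigma^{-1}$). Since $\sigma$ is a homeomorphism sending the family $\{I_a^n\}$ to itself, $\sigma\cdot v$ is again a triangulation differing from $E$ in finitely many triangles, hence a vertex of $\mathcal{C}$. Because $\sigma$ carries quadrilaterals to quadrilaterals and diagonals to diagonals it sends flips to flips, and because it preserves the relation ``two arcs bound a common triangle'' it sends squared $2$-cells to squared $2$-cells and pentagonal $2$-cells to pentagonal $2$-cells (remark \ref{cells}). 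Thus $\sigma$ induces a cellular automorphism $\varphi_R\in\acp$.

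I would then identify the induced link map. Since $\sigma\cdot E=E$, we have $\varphi_R(E)=E$, so $\varphi_R$ induces an isomorphism $(\varphi_R)_{*,E}\colon \DdE\to\DdE$. Tracking the geometric description of the link, the vertex of $\DdE$ attached to an arc $a$ is sent to the vertex attached to $\sigma(a)$, which is precisely the symmetry of axis $I_0^1$ defining $i_R$. Moreover $\sigma$ reverses the orientation of $\mathbb{D}^2$, hence reverses the natural orientation of every triangle of the link; therefore $(\varphi_R)_{*,E}=i_R$, establishing existence.

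Finally, uniqueness follows verbatim from the uniqueness argument of Lemma \ref{lemauto}, which used only the combinatorial facts in remarks \ref{cells} and \ref{adjacent} and not orientation-preservation: the data $\varphi_R(E)=E$ and $i_R$ determine $\varphi_R$ on the unit sphere around $E$, and the determination then propagates inductively over the balls $\mathcal{B}_{\cp}(E,n)$, so any $\psi\in\acp$ with $\psi_{*,E}=i_R$ agrees with $\varphi_R$ on the $0$-skeleton and hence on all of $\mathcal{C}$. The only step requiring genuine care is the verification that $\sigma$ respects the distinction between pentagonal and squared cells, i.e. that it preserves arc-adjacency within triangles; but this is immediate from $\sigma$ being a homeomorphism preserving $E$, so beyond this bookkeeping I expect no real obstacle.
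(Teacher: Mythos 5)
Your proposal is correct and follows essentially the same route as the paper: the paper's proof likewise obtains existence by sending each vertex to its image under the symmetry of axis $I_0^1$ and gets uniqueness verbatim from the argument of Lemma \ref{lemauto}. You have simply supplied the verifications (that the reflection preserves the family $\{I_a^n\}$, sends cells to cells of the same type, and induces $i_R$ on the link) that the paper leaves implicit.
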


\begin{proof}
The uniqueness can be proved exactly as in lemma \ref{lemauto}. 
For the existence it suffices to send each vertex to its symmetric with
respect to the symmetry of axis $I_0^1$.
\end{proof}
 
\subsection{Proof of theorem 1}

In order to simplify the prove of theorem 1 and give a geometric 
interpretation of it, we introduce the following concept:

\begin{definition}
Let $\varphi \in \acp$. It is {\rm{orientation preserving (reversing)}} 
if it is $v$-orientation preserving (reversing) for all vertices $v$ of 
$\mathcal{C}$.
\end{definition}

\begin{remark} 
{\rm{As a consequence of lemma \ref{lemauto}, the subgroup of orientation 
preserving automorphisms of $\mathcal{C}$ is isomorphic to $\mathcal{T}$.}} 
\label{orPres}
\end{remark}

\begin{remark}
{\rm{The automorphism $\varphi_R$ from lemma \ref{sym} is orientation 
reversing.}} \label{orRev}
\end{remark}

Using remarks \ref{orPres} and \ref{orRev}, one obtains the following classification of the 
automorphisms of $\mathcal{C}$:

\begin{proposition}
Every automorphism of $\mathcal{C}$ is either orientation preserving or
orientation reversing. \label{classification}
\end{proposition}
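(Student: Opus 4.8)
The plan is to reduce the global dichotomy to the single link $\DdE$ and then transport the conclusion to every vertex using the two building blocks already in hand: Lemma \ref{lemauto} (uniqueness of the automorphism realizing an orientation-preserving link isomorphism, together with the fact that it lies in $\Psi(\mathcal{T})$) and the reflection $\varphi_R$ of Lemma \ref{sym}. Fix an automorphism $\varphi \in \acp$ and consider the induced link isomorphism $\varphi_{*,E} \colon \DdE \to {\mathcal{L}^2}(\varphi(E))$ provided by Lemma \ref{lemind}. First I would show that $\varphi_{*,E}$ is \emph{pure}, i.e. orientation preserving on every triangle of $\DdE$, or orientation reversing on every triangle. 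Indeed, Lemma \ref{transp} forbids $\varphi_{*,E}$ from being reversing on one triangle while preserving on a second triangle sharing a vertex with it. Since $\DdE$ is a connected union of triangles in which triangles meet only along common vertices, any two triangles are joined by a chain of triangles in which consecutive members share a vertex; applying Lemma \ref{transp} along such a chain forces the orientation behaviour to be constant throughout $\DdE$. Hence $\varphi$ is either $E$-orientation preserving or $E$-orientation reversing.

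Suppose first that $\varphi_{*,E}$ is orientation preserving. Then $i := \varphi_{*,E}$ is an orientation preserving isomorphism $\DdE \to {\mathcal{L}^2}(\varphi(E))$, so Lemma \ref{lemauto} produces a \emph{unique} automorphism $\varphi_i \in \acp$ with $(\varphi_i)_{*,E} = i$, and moreover $\varphi_i \in \Psi(\mathcal{T})$. Since $\varphi$ itself satisfies $\varphi_{*,E} = i$, the uniqueness clause forces $\varphi = \varphi_i \in \Psi(\mathcal{T})$. By Remark \ref{orientT} every element of $\Psi(\mathcal{T})$ is $v$-orientation preserving for all vertices $v$, so $\varphi$ is orientation preserving, as desired.

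Now suppose $\varphi_{*,E}$ is orientation reversing; the idea is to correct $\varphi$ by the reflection $\varphi_R$. Using functoriality of the induced link maps, $(\varphi_R \circ \varphi)_{*,E} = (\varphi_R)_{*,\varphi(E)} \circ \varphi_{*,E}$, and on each triangle the orientation sign of a composite is the product of the two signs. Since $\varphi_R$ is orientation reversing at every vertex (Remark \ref{orRev}), in particular at $\varphi(E)$, the composite $(\varphi_R \circ \varphi)_{*,E}$ is orientation preserving; by the previous paragraph $\psi := \varphi_R \circ \varphi \in \Psi(\mathcal{T})$ is orientation preserving, whence $\varphi = \varphi_R^{-1}\circ \psi$. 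As the inverse of an everywhere-orientation-reversing automorphism is again everywhere orientation reversing, and $\psi$ is everywhere orientation preserving, the composite $\varphi_R^{-1}\circ \psi$ is orientation reversing at every vertex, so $\varphi$ is orientation reversing. This establishes the dichotomy. The main obstacle is the purity step of the first paragraph: everything afterwards is routine bookkeeping with orientation signs, whereas purity at $E$ is precisely the point where Lemma \ref{transp} must be combined with the connectedness of the link $\DdE$, and it is what rules out an automorphism behaving inconsistently on different triangles of a single link.
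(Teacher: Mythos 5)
Your proof is correct and follows essentially the same route as the paper's: establish the dichotomy at the single link $\DdE$ via Lemma \ref{transp}, then use the uniqueness in Lemma \ref{lemauto} to conclude $\varphi\in\Psi(\mathcal{T})$ in the preserving case, and correct by $\varphi_R$ in the reversing case. The only differences are cosmetic: you spell out the connectedness-of-the-link argument that the paper leaves implicit in the phrase ``as a consequence of lemma \ref{transp},'' and you compose with $\varphi_R$ on the left where the paper composes on the right (which lets it get by with $\varphi_R$ being orientation reversing at $E$ alone).
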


\begin{proof}
As a consequence of lemma \ref{transp}, given an automorphism $\varphi$ 
of $\mathcal{C}$, its restriction $\varphi_{*,E}$ to the link $\DdE$
is either $E$-orientation preserving or $E$-orientation reversing. In the
first case, $\varphi \in \Psi(\mathcal{T})$ by lemma \ref{lemauto} and thus
it is orientation preserving (remark \ref{orPres}). In the second case,
consider the automorphism $\varphi'=\varphi \circ \varphi_R$, which is 
$E$-orientation preserving and thus orientation preserving. Then, 
$\varphi = \varphi' \circ \varphi_R$ (note that $\varphi_R^{-1}=\varphi_R$),
therefore it is orientation reversing (remark \ref{orRev}).
\end{proof}

Now, we prove theorem 1:

\begin{proof}
We want to construct the following exact sequence:
$$1 \longrightarrow \mathcal{T} \longrightarrow \acp
\longrightarrow \mathbb{Z}/2\mathbb{Z} \longrightarrow 1,$$
and prove that it splits.

We denote by $\Pi: \acp \rightarrow \mathbb{Z}/2\mathbb{Z}$ and
define it as follows:

$$
 \varphi  \mapsto  \left\{ \begin{array}{ll}
 0, & \text{if } \varphi \text{ is orientation preserving}\\
 1, & \text{if } \varphi \text{ is orientation reversing.}
\end{array}\right.
$$

Proposition \ref{classification} shows that the map $\Pi$ is a well
defined morphism of groups. As a consequence of lemmas \ref{lemauto} 
and \ref{sym}, the map $\Pi$ is surjective and the isomorphism 
$\mathbb{Z}/2\mathbb{Z} \simeq \left<\varphi_R\right>$ is a section of $\Pi$. 

Finally, $\text{ker}(\Pi) \simeq \mathcal{T}$ follows from lemma 
\ref{lemauto}.
\end{proof}

Therefore, we have an action of $\mathbb{Z}/2\mathbb{Z}$ into the group
of automorphisms of $\mathcal{T}$, where the generator of 
$\mathbb{Z}/2\mathbb{Z}$ acts as the automorphism $\gamma_R$ which is 
defined by $\gamma_R(t)= \varphi_R \circ t \circ \varphi_R$, for 
$t \in \mathcal{T}$. In particular, the automorphism $\gamma_R$ obtained in 
$\mathcal{T}$ is defined by $\gamma_R(\alpha)=\alpha^{-1}$ and 
$\gamma_R(\beta)=\alpha^2 \beta^{-1} \alpha^2$. Thus, if we compose $\gamma_R$
with the conjugation by $\alpha^2$ we obtain the generator of the group of 
outer automorphisms of $\mathcal{T}$ sending $\alpha$ to $\alpha^{-1}$ 
and $\beta$ to $\beta^{-1}$, which is the generator of 
$\text{Out}(\mathcal{T})$ given by Brin in \cite{brin}.

\begin{remark} 
{\rm{(Geometric interpretation of theorem 1.)
By proposition \ref{classification}, all automorphisms of $\mathcal{C}$ 
are either orientation preserving or orientation reversing. Thus, if one
defines the \textit{extended asymptotic mapping class group of $\Sigma$} as
$\mathcal{T} \rtimes \mathbb{Z}/2\mathbb{Z}$, theorem 1 turns out to be
an exact analogue to Ivanov's theorem for the planar surface of infinite
type $\Sigma$.}}
\end{remark}

\end{document}